\DeclareMathOperator{\corr}{corr}
\DeclareMathOperator{\ME}{\mathsf{E}}
\DeclareMathOperator{\indicator}{\mathsf{1}}
\DeclareMathOperator{\constone}{\indicator_{[0,T]}}
\DeclareMathOperator{\stepzerotot}{\indicator_{[0,t]}}
\DeclareMathOperator{\cov}{cov}
\DeclareMathOperator{\var}{var}
\DeclareMathOperator{\Betafunction}{B}
\newcommand*{\norm}[1]{\left\lVert#1\right\rVert}
\newcommand*{\abs}[1]{\left\lvert#1\right\rvert}
\newtheorem{theorem}{Theorem}[section]
\newtheorem{lemma}[theorem]{Lemma}
\newtheorem{prop}[theorem]{Proposition}
\newtheorem{corollary}[theorem]{Corollary}
\theoremstyle{definition}
\newtheorem{assumption}{Assumption}
\newtheorem{example}[theorem]{Example}
\theoremstyle{remark}
\newtheorem{remark}[theorem]{Remark}
\newcommand{\otoprule}{\midrule[\heavyrulewidth]}
\newcolumntype{Q}{>{$}l<{$}}
\providecommand{\keywords}[1]{\textit{Keywords:} #1}
\newcommand{\Prob}{\mathsf{P}}
\begin{document}
\author{Yuliya Mishura}
\address{Department of Probability Theory, Statistics and Actuarial Mathematics, Taras Shev\-chen\-ko National University of Kyiv,
64 Volodymyrska, 01601 Kyiv, Ukraine} \email{myus@univ.kiev.ua}

\author{Kostiantyn Ralchenko}
\address{Department of Probability Theory, Statistics and Actuarial Mathematics, Taras Shev\-chen\-ko National University of Kyiv,
64 Volodymyrska, 01601 Kyiv, Ukraine}
\email{k.ralchenko@gmail.com}

\author{Sergiy Shklyar}
\address{Department of Probability Theory, Statistics and Actuarial Mathematics, Taras Shev\-chen\-ko National University of Kyiv,
64 Volodymyrska, 01601 Kyiv, Ukraine}
\email{shklyar@univ.kiev.ua}

\title[Maximum likelihood drift estimation for Gaussian process]{Maximum likelihood drift estimation\\ for Gaussian process with stationary increments}

\begin{abstract}
The paper deals with the regression model
$X_t = \theta t + B_t$,\linebreak $t\in[0, T ]$,
where $B=\{B_t, t\geq 0\}$ is a centered Gaussian process with stationary increments.
We study the estimation of the unknown parameter $\theta$ and establish the formula for the likelihood function in terms of a solution to an integral equation.
Then we find the maximum likelihood estimator and prove its strong consistency. The results obtained generalize the known results for fractional and mixed fractional Brownian motion.
\end{abstract}
\keywords{Gaussian process, stationary increments, discrete observations, continuous observations; maximum likelihood estimator, strong consistency, fractional Brownian motion,  mixed fractional Brownian motion}

\maketitle

\section{Introduction}

We study the problem of the drift parameter estimation for the stochastic process
\begin{equation}\label{eq:proc}
X_t = \theta t + B_t,
\end{equation}
where $\theta \in \mathbb{R}$ is an unknown parameter, and $B=\{B_t, t\geq 0\}$ is a centered Gaussian process with stationary increments,
$B_0 = 0$.
In the particular case when $B=B^H$ is a fractional Brownian motion, this model has been studied by many authors.
Mention the paper \cite{Norros1999} that  treats the maximum likelihood estimation by continuous observations of the trajectory of $X$ on the interval $[0,T]$ (see also \cite{Breton98}).
Further, the paper \cite{HNXZ11} investigates the exact maximum likelihood estimator by discrete observations at the points $tk=kh$, $k=1,2,\ldots,N$; the paper
\cite{BTT} considers the maximum likelihood estimation in the discrete scheme of observations, where the trajectory of $X$ is observed at the points
$t_k=\frac{k}{N}$, $k=1,2,\ldots,N^\alpha$, $\alpha>1$.
For hypothesis testing of the drift parameter sign in the model \eqref{eq:proc} driven by a fractional Brownian motion, see \cite{Stiburek16}.
The paper  \cite{CaiChigKlept} treats  the  likelihood function   for Gaussian processes not necessarily having stationary increments. However, on the one hand, our approach is different from their one, it cannot be deduced from their general formulas and on the other hand, gives  rather elegant representations.
The construction of the maximum likelihood estimator in the case when $B$ is the sum of two fractional Brownian motions was studied in \cite{MiSumfbms1} and \cite{MishuraVoronov}.
A similar non-Gaussian model driven by the Rosenblatt process was considered in \cite{BTT}.

As already mentioned,  we consider the case when $B$ is a centered Gaussian processes with stationary increments. We construct the maximum likelihood estimators for both discrete and continuous schemes of observations.
The assumptions on the process in the continuous case are formulated in terms of the second derivative of its covariance function, see Assumptions~\ref{assump-B} and \ref{assump-hexists}.
The exact formula for the maximum likelihood estimator
contains a solution of an integral equation with  the  kernel obtained after the differentiation.
We give the sufficient conditions for the strong consistency of the estimators. Several examples of the process $B$ are considered.

The paper is organized as follows.
Section~\ref{sec:discr} is devoted to the case of the discrete observations. The maximum likelihood estimation for continuous time is studied in Section~\ref{sec:cont}.

\section{Maximum likelihood estimation by discrete observations}\label{sec:discr}
We start with the construction of the likelihood function and the maximum likelihood estimator in the case of discrete observations. In the next section these results will be used for the
derivation of the likelihood function in the continuous-time case, see the proof of Theorem~\ref{th-L}.

Let the process $X$   defined by formula  \eqref{eq:proc} be observed at the points
$t_k$,\linebreak $k=0,1,\ldots,N$,
\[
0 = t_0 < t_1 < \ldots < t_N  = T .
\]
The problem is to estimate the parameter $\theta$ by the observations $X_{t_k}$, $k=0, 1, \ldots, N$ of the process $X_t$.

\subsection{Likelihood function and construction of the estimator}

Denote
\[
\Delta X^{(N)} = \left(X_{t_k} - X_{t_{k-1}}\right)_{k=1}^N, \quad
\Delta B^{(N)} = \left(B_{t_k} - B_{t_{k-1}}\right)_{k=1}^N.
\]
Note that in our model $X_{t_0} = X_0 = 0$, and the $N$-dimensional vector $\Delta X^{(N)}$ is a one-to-one function of the observations.
The vectors $\Delta B^{(N)}$ and $\Delta X^{(N)}$ are Gaussian with different means (except the case $\theta=0$) and the same covariance matrix.
We denote this covariance matrix by $\Gamma^{(N)}$.
The next maximum likelihood estimator coincides with the least square estimator considered in \citet[eq.~(4a.1.5)]{Rao:2002}.
\begin{lemma}\label{pr:L^N}
Assume that the Gaussian distribution of the vector
$(B_{t_k})_{k=1}^N$ is nonsingular.
Then one can take the function
\begin{equation}
\label {l-def-114}
L^{(N)}_{\Delta X^{(N)} = x}(\theta)
=  \frac{f_{\theta} (x)}{f_{0} (x)}
= \exp\left\{
\theta z^\top \bigl(\Gamma^{(N)}\bigr)^{-1} x -
\frac{\theta^2}{2} z^\top \bigl(\Gamma^{(N)}\bigr)^{-1} z
\right\} ,
\end{equation}
where $ z = (t_k - t_{k-1})_{k=1}^N$,
as a likelihood function in the discrete-time model.
MLE is linear with respect to the observations and equals
\begin{equation}\label{eqMLEdisc}
\hat\theta^{(N)} = \frac{z^\top \left(\Gamma^{(N)}\right)^{-1} \Delta X^{(N)}}
{z^\top \left(\Gamma^{(N)}\right)^{-1} z} .
\end{equation}
\end{lemma}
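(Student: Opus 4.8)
The plan is to write down the Gaussian density of $\Delta X^{(N)}$ explicitly and form the likelihood ratio against the null value $\theta=0$. First I would note that the linear map sending $(B_{t_k})_{k=1}^N$ to $\Delta B^{(N)}$ is given by a lower-triangular matrix with unit diagonal, hence is invertible; consequently the covariance matrix $\Gamma^{(N)}$ of $\Delta B^{(N)}$ — which also equals the covariance of $\Delta X^{(N)}$, since $\Delta X^{(N)} = \theta z + \Delta B^{(N)}$ differs from $\Delta B^{(N)}$ only by the deterministic shift $\theta z$ — is nonsingular as soon as the distribution of $(B_{t_k})_{k=1}^N$ is. Being a nonsingular covariance matrix, $\Gamma^{(N)}$ is symmetric and positive definite, and so is $\bigl(\Gamma^{(N)}\bigr)^{-1}$; this is needed both for writing the density and for the maximization step.

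Next I would record that $\Delta X^{(N)}$ is an $N$-dimensional Gaussian vector with mean $\theta z$ and covariance $\Gamma^{(N)}$, so its density is
\[
f_\theta(x) = \bigl((2\pi)^N \det \Gamma^{(N)}\bigr)^{-1/2} \exp\left\{ -\tfrac12 (x-\theta z)^\top \bigl(\Gamma^{(N)}\bigr)^{-1}(x-\theta z)\right\},
\]
while $f_0$ is the same density with $\theta=0$. Since $\Delta X^{(N)}$ is a one-to-one affine function of the observations $(X_{t_k})_{k=0}^N$, the likelihood built from the observations coincides, up to a positive factor independent of $\theta$, with $f_\theta$ evaluated at $x=\Delta X^{(N)}$; dividing by the ($\theta$-free, strictly positive) factor $f_0(x)$ is therefore a legitimate normalization and yields a bona fide likelihood function. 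Expanding the quadratic form, the normalizing constant and the term $x^\top \bigl(\Gamma^{(N)}\bigr)^{-1} x$ cancel, leaving exactly
\[
\frac{f_\theta(x)}{f_0(x)} = \exp\left\{ \theta\, z^\top \bigl(\Gamma^{(N)}\bigr)^{-1} x - \tfrac{\theta^2}{2}\, z^\top \bigl(\Gamma^{(N)}\bigr)^{-1} z\right\},
\]
which is \eqref{l-def-114}.

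Finally, to obtain the MLE I would maximize $\ell(\theta) := \log L^{(N)}_{\Delta X^{(N)}=x}(\theta) = \theta\, z^\top \bigl(\Gamma^{(N)}\bigr)^{-1} x - \tfrac{\theta^2}{2}\, z^\top \bigl(\Gamma^{(N)}\bigr)^{-1} z$ over $\theta\in\mathbb{R}$. Because $z=(t_k-t_{k-1})_{k=1}^N$ is a nonzero vector and $\bigl(\Gamma^{(N)}\bigr)^{-1}$ is positive definite, the coefficient $z^\top \bigl(\Gamma^{(N)}\bigr)^{-1} z$ is strictly positive, so $\ell$ is a strictly concave quadratic; setting $\ell'(\theta) = z^\top \bigl(\Gamma^{(N)}\bigr)^{-1} x - \theta\, z^\top \bigl(\Gamma^{(N)}\bigr)^{-1} z = 0$ gives the unique maximizer, and substituting $x=\Delta X^{(N)}$ yields \eqref{eqMLEdisc}; linearity in the observations is then manifest. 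The computation is essentially routine; the only place that genuinely needs an argument is the passage from nonsingularity of $(B_{t_k})_{k=1}^N$ to positive definiteness of $\Gamma^{(N)}$ (hence to $z^\top \bigl(\Gamma^{(N)}\bigr)^{-1} z > 0$), which is the step I flagged above, and the minor conceptual point that rescaling the density by the $\theta$-independent $f_0(x)$ does not change the maximizer.
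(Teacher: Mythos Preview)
Your proposal is correct and follows the same approach as the paper, which simply writes down the Gaussian density $f_\theta$ of $\Delta X^{(N)}$ and declares $f_\theta/f_0$ to be the likelihood function. You supply more detail than the paper does---in particular the passage from nonsingularity of $(B_{t_k})_{k=1}^N$ to positive definiteness of $\Gamma^{(N)}$ and the explicit concavity/maximization argument---but the route is the same.
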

\begin{proof}
The pdf of $\Delta B^{(N)}$ with respect to the Lebesgue measure equals
\[
f_{\theta} (x)=
\frac{1}{(2 \pi)^{N/2} \sqrt{\det \Gamma^{(N)}}}
\exp\left\{
- \frac{1}{2} ( x - \theta z )^\top  \bigl(\Gamma^{(N)}\bigr)^{-1}
( x - \theta z ) \right\} .
\]
The density of the observations for given $\theta$ with respect to the distribution of the observations for $\theta=0$
is taken as a likelihood function.
\end{proof}

\begin{remark}\label{rem-Toeplitz}
Let the process $X$ be observed on a regular grid,
i.e., at the points $t_k = k h$,
$k=1, \ldots, N$, where $h>0$.
Then $\Gamma^{(N)}$ is a Toeplitz matrix, that is
\begin{align*}
\Gamma^{(N)}_{k+l, l} =
\Gamma^{(N)}_{l, k+l} & =
\ME\left(B_{(k+l)h} - B_{(k+l-1)h}\right) \left(B_{l h} - B_{(l-1)h}\right)\\ &=
\ME B_{(k+1)h} B_{h} - \ME B_{k h} B_{h}
\end{align*}
does not depend on $l$ due to the stationarity of increments.
This simplifies the numerical computation of MLE.
\end{remark}

\subsection{Properties of the  estimator}
Since $\Delta X^{(N)} = \Delta B^{(N)} + \theta z$,
the maximum likelihood estimator \eqref{eqMLEdisc}
equals
\[
\hat\theta^{(N)} = \theta +
\frac{z^\top \left(\Gamma^{(N)}\right)^{-1} \Delta B^{(N)}}
{z^\top \left(\Gamma^{(N)}\right)^{-1} z}\, .
\]

\begin{lemma}\label{discrprop}
Under assumptions of Lemma~\ref{pr:L^N}, the estimator $\hat\theta^{(N)}$
is unbiased and  normally distributed. Its variance equals
\[
\var \hat\theta^{(N)} = \frac{1}{z^\top \left(\Gamma^{(N)}\right)^{-1} z} \,.
\]
\end{lemma}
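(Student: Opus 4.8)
The plan is to exploit the explicit linear representation of $\hat\theta^{(N)}$ in terms of the Gaussian vector $\Delta B^{(N)}$ displayed just before the statement, namely $\hat\theta^{(N)} = \theta + \bigl(z^\top(\Gamma^{(N)})^{-1}\Delta B^{(N)}\bigr)/\bigl(z^\top(\Gamma^{(N)})^{-1}z\bigr)$. First I would record that the hypothesis of Lemma~\ref{pr:L^N} makes $\Gamma^{(N)}$ invertible; moreover, being the covariance matrix of a nonsingular Gaussian vector, $\Gamma^{(N)}$ is positive definite, hence so is $(\Gamma^{(N)})^{-1}$. Since the grid is strictly increasing, $z=(t_k-t_{k-1})_{k=1}^N$ has strictly positive entries, so $z\neq 0$ and therefore $z^\top(\Gamma^{(N)})^{-1}z>0$; in particular $\hat\theta^{(N)}$ is well defined.

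Next I would introduce the deterministic vector $a=(\Gamma^{(N)})^{-1}z\big/\bigl(z^\top(\Gamma^{(N)})^{-1}z\bigr)$, so that $\hat\theta^{(N)}-\theta=a^\top\Delta B^{(N)}$. Because $\Delta B^{(N)}$ is a centered Gaussian vector, any fixed linear functional of it is a centered univariate Gaussian random variable; hence $\hat\theta^{(N)}-\theta$ is centered Gaussian. This gives simultaneously that $\hat\theta^{(N)}$ is normally distributed and that $\ME\hat\theta^{(N)}=\theta$, i.e.\ the estimator is unbiased.

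Finally, the variance follows from the quadratic-form identity $\var(a^\top\Delta B^{(N)})=a^\top\Gamma^{(N)}a$. Substituting the definition of $a$ and cancelling one factor $\Gamma^{(N)}$ against $(\Gamma^{(N)})^{-1}$ in the numerator yields
\[
\var\hat\theta^{(N)}=\frac{z^\top(\Gamma^{(N)})^{-1}\Gamma^{(N)}(\Gamma^{(N)})^{-1}z}{\bigl(z^\top(\Gamma^{(N)})^{-1}z\bigr)^2}=\frac{z^\top(\Gamma^{(N)})^{-1}z}{\bigl(z^\top(\Gamma^{(N)})^{-1}z\bigr)^2}=\frac{1}{z^\top(\Gamma^{(N)})^{-1}z},
\]
which is the claimed formula. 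I do not expect any genuine obstacle: the sole point deserving a word of care is the positivity (hence nonvanishing) of the denominator, which as noted follows from positive definiteness of $(\Gamma^{(N)})^{-1}$ together with $z\neq 0$, and everything else is elementary linear algebra plus the stability of Gaussianity under linear maps.
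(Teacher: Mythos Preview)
Your proposal is correct and follows essentially the same approach as the paper: both arguments use the linear representation $\hat\theta^{(N)}-\theta = c^\top\Delta B^{(N)}$ with $c$ proportional to $(\Gamma^{(N)})^{-1}z$ to deduce unbiasedness and normality, and then compute the variance via $\var(c^\top\Delta B^{(N)})=c^\top\Gamma^{(N)}c$ with the same cancellation. Your extra remark that $z^\top(\Gamma^{(N)})^{-1}z>0$ (from positive definiteness and $z\neq0$) is a welcome clarification the paper leaves implicit.
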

\begin{proof}
The estimator $\hat\theta^{(N)}$ is unbiased and normally distributed
because $\hat\theta^{(N)} - \theta$ is linear and  centered Gaussian
vector $\Delta B^{(N)}$.
The variance of the estimator is equal to
\begin{align*}
\var \hat\theta^{(N)} &=
\frac{\var\left( z^\top \left(\Gamma^{(N)}\right)^{-1} \Delta B^{(N)} \right)}
{\left(z^\top \left(\Gamma^{(N)}\right)^{-1} z\right)^2}
=\frac{z^\top \left(\Gamma^{(N)}\right)^{-1} \var\left(\Delta B^{(N)}\right) \left(\Gamma^{(N)}\right)^{-1} z}
{\left(z^\top \left(\Gamma^{(N)}\right)^{-1} z\right)^2} \\ &=
\frac{z^\top \left(\Gamma^{(N)}\right)^{-1} \Gamma^{(N)} \left(\Gamma^{(N)}\right)^{-1} z}
{\left(z^\top \left(\Gamma^{(N)}\right)^{-1} z\right)^2} =
\frac{1}
{z^\top \left(\Gamma^{(N)}\right)^{-1} z}\,.
\qedhere
\end{align*}
\end{proof}

To prove the consistency of the estimator, we need the following technical result.
\begin{lemma}
If $A \in \mathbb{R}^{N \times N}$ is a positive definite matrix,
$x \in \mathbb{R}^N$, $x\neq 0$ is a non-zero vector, then
\[
 x^\top A^{-1} x \ge \frac {\|x\|^4} {x^\top A x}\, .
\]
\end{lemma}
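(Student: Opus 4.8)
The plan is to recognise this inequality as a Cauchy--Schwarz inequality in disguise. Since $A$ is symmetric and positive definite, the spectral theorem provides a symmetric positive definite square root $A^{1/2}$, which is invertible with inverse $A^{-1/2}$. The crucial trick is to split one copy of $x$ when forming $\|x\|^2$, writing $x = A^{1/2}\bigl(A^{-1/2}x\bigr)$, so that
\[
\|x\|^2 = x^\top x = \bigl(A^{1/2} x\bigr)^\top \bigl(A^{-1/2} x\bigr) = \langle A^{1/2} x,\; A^{-1/2} x\rangle .
\]

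First I would record the two identities $\|A^{1/2}x\|^2 = x^\top A x$ and $\|A^{-1/2}x\|^2 = x^\top A^{-1} x$, which follow immediately from the symmetry of $A^{1/2}$ and $A^{-1/2}$. Then I apply the Cauchy--Schwarz inequality to the vectors $A^{1/2}x$ and $A^{-1/2}x$:
\[
\|x\|^4 = \langle A^{1/2}x,\; A^{-1/2}x\rangle^2 \le \|A^{1/2}x\|^2\,\|A^{-1/2}x\|^2 = \bigl(x^\top A x\bigr)\bigl(x^\top A^{-1}x\bigr).
\]
Finally, since $x \neq 0$ and $A$ is positive definite, the scalar $x^\top A x$ is strictly positive, so dividing both sides by it yields the asserted bound $x^\top A^{-1} x \ge \|x\|^4 / (x^\top A x)$.

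There is essentially no serious obstacle here; the only points requiring a word of care are the existence of $A^{1/2}$ (guaranteed by positive definiteness) and the strict positivity of the denominator $x^\top A x$ (again from positive definiteness together with $x \neq 0$), which legitimises the final division. If one prefers to avoid square roots altogether, an equivalent route is to diagonalise $A = Q^\top D Q$ with $D = \operatorname{diag}(\lambda_1,\dots,\lambda_N)$ and orthogonal $Q$, put $y = Qx$, and reduce to the scalar statement $\bigl(\sum_i y_i^2\bigr)^2 \le \bigl(\sum_i \lambda_i y_i^2\bigr)\bigl(\sum_i \lambda_i^{-1} y_i^2\bigr)$, which is once more Cauchy--Schwarz, now applied to the coordinate vectors $\bigl(\sqrt{\lambda_i}\,y_i\bigr)_i$ and $\bigl(y_i/\sqrt{\lambda_i}\bigr)_i$.
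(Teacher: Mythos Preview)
Your proof is correct and follows essentially the same route as the paper: introduce the positive definite square root $A^{1/2}$, write $\|x\|^4 = (x^\top A^{1/2} A^{-1/2} x)^2$, apply Cauchy--Schwarz to obtain $\|x\|^4 \le (x^\top A x)(x^\top A^{-1} x)$, and divide by the strictly positive quantity $x^\top A x$. The additional diagonalisation remark you give is a nice alternative, but the core argument matches the paper's exactly.
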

\begin{proof}
As the matrix $A$ is positive definite, $x^\top A x > 0$
and there exists positive definite matrix $A^{1/2}$
(and so the matrix $A^{1/2}$ is symmetric and nonsingular)
such that $(A^{1/2})^2 = A$.
By the Cauchy-Schwarz inequality,
\[
\norm{x}^4 = \left(x^\top A^{1/2} A^{-1/2} x\right)^2 \le
\norm{A^{1/2} x}^2 \norm{A^{-1/2} x}^2 = \left(x^\top A x\right) \left(x^\top A^{-1} x\right),
\]
whence the desired inequality follows.
\end{proof}

In the rest of this section we assume that the process $X$ is observed on a regular grid, at the points $t_k=kh$, $k=1,\ldots,N$, for some $h>0$. We also assume that for any $N$ the Gaussian distribution of the vector $(B_{kh})_{k=1}^N$ is nonsingular.
\begin{theorem}\label{thm-consdiskr}
Let $h > 0$,
and
\[
 \ME \left(B_{(k+1) h} - B_{k h}\right) B_{h} \to 0 \quad
 \textrm{as} \quad
 k\to\infty.
\]
Let $\hat\theta^{(N)}$ be the ML estimator of parameter
$\theta$ of the model \eqref{eq:proc} by the observations
$X_{k h}$, $k=1,\ldots, N$.
Then the estimator $\hat\theta^{(N)}$ is mean-square consistent,
i.e.,
\[
\ME \left(\hat\theta^{(N)} - \theta\right)^2 \to 0
\quad \textrm{as} \quad N\to\infty .
\]
\end{theorem}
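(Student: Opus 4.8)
The plan is to combine the variance formula of Lemma~\ref{discrprop}, the matrix inequality proved just above, and the Toeplitz structure from Remark~\ref{rem-Toeplitz}, so that the whole statement reduces to a Cesàro-type convergence.

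First I would observe that on the regular grid the vector $z$ from \eqref{eqMLEdisc} is $z=(h,\dots,h)^\top$, so $\norm{z}^2=Nh^2$. Since by assumption the Gaussian vector $(B_{kh})_{k=1}^N$ is nonsingular and $\Delta B^{(N)}$ is a linear bijection of it, the matrix $\Gamma^{(N)}$ is positive definite, so Lemma~\ref{discrprop} applies and gives that $\hat\theta^{(N)}$ is unbiased with
\[
\ME\bigl(\hat\theta^{(N)}-\theta\bigr)^2=\var\hat\theta^{(N)}=\frac{1}{z^\top(\Gamma^{(N)})^{-1}z}.
\]
Applying the preceding matrix lemma with $A=\Gamma^{(N)}$ and $x=z$ yields
\[
z^\top(\Gamma^{(N)})^{-1}z\ge\frac{\norm{z}^4}{z^\top\Gamma^{(N)}z}=\frac{N^2h^4}{z^\top\Gamma^{(N)}z},
\]
hence $\ME(\hat\theta^{(N)}-\theta)^2\le \bigl(z^\top\Gamma^{(N)}z\bigr)/(N^2h^4)$.

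Next I would evaluate $z^\top\Gamma^{(N)}z=h^2\sum_{i,j=1}^N\Gamma^{(N)}_{i,j}$. By Remark~\ref{rem-Toeplitz}, $\Gamma^{(N)}_{i,j}=\rho(|i-j|)$, where $\rho(k):=\ME\bigl(B_{(k+1)h}-B_{kh}\bigr)B_h$; collecting the diagonals gives
\[
\sum_{i,j=1}^N\Gamma^{(N)}_{i,j}=N\rho(0)+2\sum_{k=1}^{N-1}(N-k)\rho(k),
\]
and since $0\le N-k\le N$,
\[
\Bigl\lvert\sum_{i,j=1}^N\Gamma^{(N)}_{i,j}\Bigr\rvert\le 2N\sum_{k=0}^{N-1}\abs{\rho(k)}.
\]
Combining this with the bound from the previous paragraph,
\[
\ME\bigl(\hat\theta^{(N)}-\theta\bigr)^2\le\frac{2}{h^2}\cdot\frac1N\sum_{k=0}^{N-1}\abs{\rho(k)}.
\]

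Finally, the hypothesis $\ME\bigl(B_{(k+1)h}-B_{kh}\bigr)B_h\to0$ is exactly $\rho(k)\to0$ as $k\to\infty$, so the Cesàro theorem gives $\frac1N\sum_{k=0}^{N-1}\abs{\rho(k)}\to0$, which proves the claim. The only mildly delicate points are verifying positive definiteness of $\Gamma^{(N)}$ (needed for the matrix lemma) and checking that the triangular weights $N-k$ do not spoil the averaging — both are routine, so I do not expect a genuine obstacle here.
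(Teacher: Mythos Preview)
Your proof is correct and follows essentially the same route as the paper: both combine the variance formula of Lemma~\ref{discrprop} with the matrix inequality $x^\top A^{-1}x\ge\|x\|^4/(x^\top Ax)$, then exploit the Toeplitz structure of $\Gamma^{(N)}$ and a Ces\`aro argument to show $\frac{1}{N^2}\sum_{i,j}\Gamma^{(N)}_{i,j}\to 0$. The only cosmetic difference is that you pass to absolute values and invoke the standard Ces\`aro theorem, whereas the paper keeps the triangular weights $(N-k)/N^2$ and appeals directly to the Toeplitz (weighted Ces\`aro) theorem.
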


\begin{proof}
By Remark~\ref{rem-Toeplitz}, the matrix $\Gamma^{(N)}$
has Toeplitz structure,
\[
\Gamma^{(N)}_{l,m} = \Gamma^{(N)}_{l,m}
=
\ME \left(B_{(|l-m|+1) h} - B_{|l-m| \, h}\right) B_{h} .
\]
Moreover, $\Gamma^{(N)}_{k,l}$ does not depend on $N$ as soon as
$N \ge \max(k, l)$.
By Toeplitz theorem,
\begin{align*}
\frac{1}{N^2} \sum_{l=1}^N \sum_{m=1}^N \Gamma_{l,m}^{(N)}
&=
\frac{1}{N} \ME (B_{h})^2 - \sum_{k=2}^N
\frac{2 (N+1-k)}{N^2}
\ME \left(B_{k h} - B_{(k-1) h}\right) B_{h}
\to \\ &\to \lim_{k\to\infty}
\ME \left(B_{k h} - B_{(k-1) h}\right) B_{h} = 0
\quad \textrm{as} \quad
N\to\infty.
\end{align*}

For regular grid we have that  $z = (h, \ldots, h)^\top$. Hence, in this case,
\[
z^\top \Gamma^{(N)} z = h^2 \sum_{l=1}^N \sum_{m=1}^N
\Gamma_{l,m}^{(N)},
\qquad \|z\| = h \sqrt{N}.
\]

Finally, with use of Lemma~\ref{discrprop},
\begin{align*}
\ME\left(\hat\theta^{(N)} - \theta\right)^2 &=
\frac{1}{z^{\top} \left(\Gamma^{(N)}\right)^{-1} z} \le
\frac{z^{\top}\Gamma^{(N)} z}{\|z\|^4} =
\frac{1}{h^2 N^2} \sum_{l=1}^N \sum_{m=1}^N
\Gamma_{l,m}^{(N)} \to 0,
\end{align*}
 as $N\to\infty$.
\end{proof}

To prove the strong consistency, we need the following auxiliary statement.
\begin{lemma}\label{lem-indincrdiscr}
Let $h>0$, and $\hat\theta^{(N)}$ be the ML estimator of parameter $\theta$ of the model \eqref{eq:proc} by the observations $X_{kh}$, $k=1,\ldots,N$.
Then the random process $\hat\theta^{(N)}$ has independent increments.
\end{lemma}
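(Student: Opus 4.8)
The plan is to reduce everything to a covariance computation by exploiting joint Gaussianity. Since $\theta$ is deterministic, the increments of $\hat\theta^{(N)}$ coincide with those of $u_N:=\hat\theta^{(N)}-\theta$, so I would work with $u_N$. By \eqref{eqMLEdisc},
\[
u_N=\frac{w_N}{c_N},\qquad w_N=z^\top\bigl(\Gamma^{(N)}\bigr)^{-1}\Delta B^{(N)},\qquad c_N=z^\top\bigl(\Gamma^{(N)}\bigr)^{-1}z>0,
\]
where $w_N$ is a linear functional of the centered Gaussian vector $\Delta B^{(N)}$ and $c_N$ is a nonrandom normalizing constant. Thus all the $u_N$, $N\ge1$, are centered and jointly Gaussian (they all lie in the Gaussian linear space generated by $\{B_{kh}\}_{k\ge1}$), and consequently "$u_N$ has independent increments" is equivalent to "increments of $u_N$ over disjoint index ranges are pairwise uncorrelated", because a finite family of jointly Gaussian variables with pairwise zero covariance has diagonal covariance matrix, hence is mutually independent.

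The heart of the matter is the identity $\cov(u_N,u_M)=1/c_{\max(M,N)}$, which I would derive from $\cov(w_N,w_M)=c_{\min(M,N)}$. The latter rests on two observations: first, $\cov\bigl(w_N,\Delta B^{(N)}_j\bigr)=\bigl(z^\top(\Gamma^{(N)})^{-1}\Gamma^{(N)}\bigr)_j=z_j$ for every $j\le N$, which is immediate from the definition of $w_N$ and $\cov\bigl(\Delta B^{(N)},\Delta B^{(N)}\bigr)=\Gamma^{(N)}$; second, since the grid $t_k=kh$ is regular (hence nested), for $M\le N$ the first $M$ entries of $\Delta B^{(N)}$ are exactly $\Delta B^{(M)}$ and the leading $M\times M$ block of $\Gamma^{(N)}$ is $\Gamma^{(M)}$, so $w_M=z^\top(\Gamma^{(M)})^{-1}\Delta B^{(M)}$ is a linear combination of those first $M$ entries only. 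Combining the two,
\[
\cov(w_N,w_M)=z^\top\bigl(\Gamma^{(M)}\bigr)^{-1}\,\cov\bigl(w_N,\Delta B^{(M)}\bigr)=z^\top\bigl(\Gamma^{(M)}\bigr)^{-1}z=c_M\qquad(M\le N),
\]
and dividing by $c_Nc_M$ gives $\cov(u_N,u_M)=1/c_N$ for $M\le N$, i.e.\ $\cov(u_N,u_M)=1/c_{\max(M,N)}$ (taking $M=N$ recovers $\var u_N=1/c_N$, consistent with Lemma~\ref{discrprop}).

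To conclude: from the identity, for any indices $1\le a<b\le m$ the increment $u_b-u_a$ satisfies $\cov(u_b-u_a,u_m)=1/c_m-1/c_m=0$; hence, for disjoint index-intervals $(a,b]$ and $(c,d]$ with $b\le c$, both $\cov(u_b-u_a,u_c)=0$ and $\cov(u_b-u_a,u_d)=0$, so that $\cov(u_b-u_a,u_d-u_c)=0$. Any two increments of $u_N$ over disjoint ranges are therefore uncorrelated, and being jointly Gaussian they are independent; applying this to an arbitrary finite collection of consecutive increments yields the assertion. I expect the only genuinely delicate point to be the $N$-dependence of the normalizer $c_N$: it prevents a one-line "orthogonal increments" argument applied to $u_N$ directly — that argument does work for the unnormalized numerators, since $w_N-w_{N-1}\perp\findimspan\{B_h,\dots,B_{(N-1)h}\}$ in $L^2(\Omega)$, but after dividing by the varying constants this structure is no longer visible (indeed $\hat\theta^{(1)}$ is in general not independent of the later increments, though that is not required by the independent-increments property). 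The covariance identity of the second paragraph is precisely what absorbs this dependence.
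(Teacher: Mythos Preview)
Your proposal is correct and follows essentially the same route as the paper: both arguments compute the key covariance identity $\cov\bigl(\hat\theta^{(N)},\hat\theta^{(M)}\bigr)=1/c_{\max(M,N)}$ via the nesting $\Delta B^{(M)}=(I,0)\,\Delta B^{(N)}$ for $M\le N$, deduce that increments over disjoint index ranges are uncorrelated, and invoke joint Gaussianity to upgrade to independence. Your final paragraph of commentary on the normalizers $c_N$ is extraneous but not wrong.
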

\begin{proof}
In the next paragraph, $N_2 \le N_3$ are positive integers, $$(I, 0) = (I_{N_2}, 0_{N_2\times(N_3-N_2)})$$
is $N_2 \times N_3$ diagonal matrix with ones on the diagonal, and its transpose is denoted by
$\left( \begin{smallmatrix} I \\ 0 \end{smallmatrix} \right)$.
The vector
$\Delta B^{(N_2)} = (B_{h},\ldots,B_{(N_2-1)h}-B_{N_2 h})^\top$
is the beginning of vector
$\Delta B^{(N_3)} = (B_{h},\ldots,B_{(N_3-1)h}-B_{N_3 h})^\top$;
the vector $z_{N_2} = (h, \ldots, h)^\top \in \mathbb{R}^{N_2}$
is the beginning of vector $z_{N_3}=(h, \ldots, h)^\top \in \mathbb{R}^{N_3}$,
so
\[
\Delta B^{(N_2)} = (I,0) \, \Delta B^{(N_3)}, \qquad
z_{N_2} = (I,0) \, z_{N_3}.
\]
Then
\begin{gather*}
\ME \Delta B^{(N_3)} \bigl(\Delta B^{(N_2)}\bigr)^\top = \ME \Delta
N^{(N_3)} \bigl(\Delta B^{(N_3)}\bigr)^\top \left(\begin{smallmatrix} I \\ 0
\end{smallmatrix}\right) =
\Gamma^{(N_3)} \left(\begin{smallmatrix} I \\ 0 \end{smallmatrix}\right),\\
\begin{aligned}
\ME \hat\theta^{(N_3)} \hat\theta^{(N_2)}
&=
\frac{\ME z_{N_3}^\top \left(\Gamma^{(N_3)}\right)^{-1} \Delta B^{(N_3)}
\left(\Delta B^{(N_2)}\right)^\top \left(\Gamma^{(N_2)}\right)^{-1} z_{N_2}^{}}
{z_{N_3}^\top \left(\Gamma^{(N_3)}\right)^{-1} z_{N_3}^{} \,
 z_{N_2}^\top \left(\Gamma^{(N_2)}\right)^{-1} z_{N_2}^{}}
= \\ & =
\frac{z_{N_3}^\top \left(\Gamma^{(N_3)}\right)^{-1} \Gamma^{(N_3)}
   \left(\begin{smallmatrix} I \\ 0 \end{smallmatrix}\right)
   \left(\Gamma^{(N_2)}\right)^{-1} z_{N_2}^{}}
{z_{N_3}^\top \left(\Gamma^{(N_3)}\right)^{-1} z_{N_3}^{} \,
 z_{N_2}^\top \left(\Gamma^{(N_2)}\right)^{-1} z_{N_2}^{}}
= \\ & =
\frac{z_{N_3}^\top
   \left(\begin{smallmatrix} I \\ 0 \end{smallmatrix}\right)
   \left(\Gamma^{(N_2)}\right)^{-1} z_{N_2}^{}}
{z_{N_3}^\top \left(\Gamma^{(N_3)}\right)^{-1} z_{N_3}^{} \,
 z_{N_2}^\top \left(\Gamma^{(N_2)}\right)^{-1} z_{N_2}^{}}
= \\ & =
\frac{z_{N_2}^\top
  \left (\Gamma^{(N_2)}\right)^{-1} z_{N_2}^{}}
{z_{N_3}^\top\left(\Gamma^{(N_3)}\right)^{-1} z_{N_3}^{} \,
 z_{N_2}^\top \left(\Gamma^{(N_2)}\right)^{-1} z_{N_2}^{}}
=
\frac{1}
{z_{N_3}^\top \left(\Gamma^{(N_3)}\right)^{-1} z_{N_3}^{}}.
\end{aligned}
\end{gather*}
For $N_1 \le N_2 \le N_3$
\begin{align*}
\ME \hat\theta^{(N_3)}
    \left(\hat\theta^{(N_2)} - \hat\theta^{(N_1)}\right)
&= \ME \hat\theta^{(N_3)} \hat\theta^{(N_2)} - \ME \hat\theta^{(N_3)} \hat\theta^{(N_1)}\\
&= \frac{1}{z_{N_3}^\top \left(\Gamma^{(N_3)}\right)^{-1} z_{N_3}^{}}
- \frac{1}{z_{N_3}^\top \left(\Gamma^{(N_3)}\right)^{-1} z_{N_3}^{}} = 0;
\end{align*}
therefore for $N_1 \le N_2 \le N_3 \le N_4$
\[
\ME \left(\hat\theta^{(N_4)} - \hat\theta^{(N_3)}\right)
    \left(\hat\theta^{(N_2)} - \hat\theta^{(N_1)}\right) = 0 .
\]
Thus, the Gaussian process $\{\hat\theta^{(N)}, \; N=1,2,\ldots\}$ is proved to have uncorrelated
increments.  Hence its increments are independent.
\end{proof}

\begin{theorem}
Under the assumptions of Theorem~\ref{thm-consdiskr},
the estimator $\theta^{(N)}$ is strongly consistent,
i.e. $\hat\theta^{(N)} \to \theta$ as $N\to\infty$ almost surely.
\end{theorem}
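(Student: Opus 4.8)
The plan is to upgrade the mean-square consistency of Theorem~\ref{thm-consdiskr} to almost sure convergence by exploiting the independent-increment structure of the process $\{\hat\theta^{(N)},\,N\ge1\}$ established in Lemma~\ref{lem-indincrdiscr}. Set $\zeta_k:=\hat\theta^{(k+1)}-\hat\theta^{(k)}$, so that $\hat\theta^{(N)}=\hat\theta^{(1)}+\sum_{k=1}^{N-1}\zeta_k$. By Lemma~\ref{discrprop} each $\zeta_k$ is centered Gaussian, and by Lemma~\ref{lem-indincrdiscr} the $\zeta_k$ are uncorrelated, hence --- being jointly Gaussian --- mutually independent; thus $\hat\theta^{(N)}-\hat\theta^{(1)}$ is the $N$-th partial sum of a series of independent centered random variables. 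Since $\hat\theta^{(N)}\to\theta$ in $L^2$ by Theorem~\ref{thm-consdiskr}, these partial sums converge in $L^2$, hence in probability, to $\theta-\hat\theta^{(1)}$. The classical fact to invoke is then that a series of independent random variables which converges in probability converges almost surely; this is precisely what the independent-increment structure buys us, turning $L^2$-convergence into a.s.\ convergence.

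To make the last step self-contained I would check the summability of the increment variances and apply Kolmogorov's one-series theorem. From the computation in the proof of Lemma~\ref{lem-indincrdiscr}, together with Lemma~\ref{discrprop}, one reads off that $\cov\bigl(\hat\theta^{(N_3)},\hat\theta^{(N_2)}\bigr)=\var\hat\theta^{(N_3)}$ whenever $N_2\le N_3$. Taking $N_2=1$, $N_3=N$ and expanding the variance of a sum of independent terms,
\[
\sum_{k=1}^{N-1}\var\zeta_k=\var\bigl(\hat\theta^{(N)}-\hat\theta^{(1)}\bigr)=\var\hat\theta^{(1)}-\var\hat\theta^{(N)}\le\var\hat\theta^{(1)}=\frac{\ME(B_h)^2}{h^2}<\infty .
\]
Hence $\sum_{k\ge1}\var\zeta_k<\infty$ (indeed the sum equals $\var\hat\theta^{(1)}$, since $\var\hat\theta^{(N)}\to0$), and Kolmogorov's theorem gives that $\sum_{k\ge1}\zeta_k$ converges almost surely.

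It remains to identify the limit. The a.s.\ convergence of the series yields $\hat\theta^{(N)}=\hat\theta^{(1)}+\sum_{k=1}^{N-1}\zeta_k\to\hat\theta^{(1)}+\sum_{k\ge1}\zeta_k$ almost surely; this limit is also a limit in probability, while by Theorem~\ref{thm-consdiskr} $\hat\theta^{(N)}\to\theta$ in probability. Since limits in probability are almost surely unique, the almost sure limit equals $\theta$, which is the assertion.

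I do not anticipate a genuine obstacle: the substance --- the exact variance formula (Lemma~\ref{discrprop}), the independence of increments (Lemma~\ref{lem-indincrdiscr}), and mean-square consistency (Theorem~\ref{thm-consdiskr}) --- is already in place, and strong consistency follows from the standard passage to almost sure convergence for sums of independent variables. The only things needing a little care are the telescoping bookkeeping that makes the increment variances visibly summable and the routine identification of the almost sure limit with the probabilistic one.
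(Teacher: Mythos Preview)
Your proof is correct and follows essentially the same approach as the paper: both arguments rest on exactly the same three ingredients (Lemma~\ref{discrprop}, Lemma~\ref{lem-indincrdiscr}, Theorem~\ref{thm-consdiskr}) and turn the independent-increment structure of $\{\hat\theta^{(N)}\}$ into almost sure convergence via a Kolmogorov-type result. The only difference is packaging: the paper applies Kolmogorov's maximal inequality directly to bound $\Prob\bigl(\sup_{N\ge N_0}|\hat\theta^{(N)}-\theta|>\epsilon\bigr)$, whereas you write $\hat\theta^{(N)}$ as a partial sum of the independent series $\sum\zeta_k$, verify $\sum\var\zeta_k<\infty$, and invoke the one-series (Khinchin--Kolmogorov) theorem---which is itself proved from the maximal inequality---before identifying the a.s.\ limit with the $L^2$-limit~$\theta$.
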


\begin{proof}
By Theorem~\ref{thm-consdiskr} $\var\hat\theta^{(N)} \to 0$ as
$N\to\infty$, so
\begin{align*}
 \var\left(\hat\theta^{(N)} - \hat\theta^{(N_0)}\right) &=
 \var\hat\theta^{(N)} + \var\hat\theta^{(N_0)} - 2 \sqrt{\var\hat\theta^{(N)}  \var\hat\theta^{(N_0)}} \corr\left(\hat\theta^{(N)}, \hat\theta^{(N_0)}\right)\\
 &\to \var\hat\theta^{(N_0)}
\end{align*}
as $N\to\infty$.
The process $\hat\theta^{(N)}$ has independent increments.
Therefore by Kolmogorov's inequality, for $\epsilon>0$ and $N \in\mathbb{N}$
\[
\Prob\left(\sup_{N\ge N_0}
\abs{\hat\theta^{(N)} - \hat\theta^{(N_0)}} > \frac{\epsilon}{2}\right)
\le \frac{4}{\epsilon^2} \lim_{N\to\infty}
\var\left(\hat\theta^{(N)} - \hat\theta^{(N_0)}\right)
= \frac{4}{\epsilon^2} \var \hat\theta^{(N_0)} .
\]
Then, using the unbiasedness of the estimator, we get
\begin{align*}
\Prob\left(
\sup_{N\ge N_0} \abs{\hat\theta^{(N)} - \theta} \ge \epsilon \right)
&\le
\Prob\left(\abs{\hat\theta^{(N_0)} - \theta} \ge \frac{\epsilon}{2} \right)
+
\Prob\left(
\sup_{N\ge N_0} \abs{\hat\theta^{(N)} - \hat\theta^{(N_0)}}
 \ge \frac{\epsilon}{2} \right)
\le \\ & \le
\frac{4}{\epsilon^2} \var\hat\theta^{(N_0)} + \frac{4}{\epsilon^2} \var\hat\theta^{(N_0)} =
\frac{8}{\epsilon^2} \var\hat\theta^{(N_0)} \to 0,
\end{align*}
as $N_0\to\infty$, whence $\abs{\hat\theta^{(N)} - \theta} \to 0$ as $N \to \infty$ almost surely.
\end{proof}

\begin{example}
Let us consider the model~\eqref{eq:proc} with  $B_t=B^{H_1}_t+B^{H_2}_t$, where $B^{H_1}_t$ and $B^{H_2}_t$ are two independent fractional Brownian motions with Hurst indices $H_1,H_2\in(0,1)$, i.\,e.
centered Gaussian processes with covariance functions
\[
\ME B^{H_i}_tB^{H_i}_s
=\frac12\left(t^{2H_i}+s^{2H_i}-\abs{t-s}^{2H_i}\right),
\quad t\ge0,\: s\ge0,\: i=1,2.
\]
These processes have stationary increments, and
\[
\ME\left(B^{H_i}_{(k+1) h} - B^{H_i}_{k h}\right) B^{H_i}_{h}
\sim h^{2H_i}H_i\left(2H_i-1\right)k^{2H_i-2}\to0,
\quad\text{as } k\to\infty,
\]
see e.\,g.\ \cite[Sec. 1.2]{mishura}.
Taking into account the independence of centered processes
$B^{H_1}_t$ and $B^{H_2}_t$,
we obtain that
\[
 \ME (B_{(k+1) h} - B_{k h}) B_{h}
 =\ME\left(B^{H_1}_{(k+1) h} - B^{H_1}_{k h}\right) B^{H_1}_{h}
 +\ME\left(B^{H_2}_{(k+1) h} - B^{H_2}_{k h}\right) B^{H_2}_{h}
 \to 0,
\]
as $k\to\infty$.
Thus, the assumptions of Theorem \ref{thm-consdiskr} are satisfied.
\end{example}

\section{Maximum likelihood estimation by continuous observations}\label{sec:cont}
Let the process $X$ be observed on the whole interval $[0, T]$.
It is required to estimate the unknown parameter $\theta$ by these observations.

\subsection{Likelihood function and construction of the estimator}

In this section we construct a formula for continuous-time MLE, similar to the formula \eqref{eqMLEdisc} for the discrete case.

\begin{assumption}\label{assump-B}
The covariance function of $B_t$ has a mixed derivative
\[
\frac{\partial^2}{\partial s\, \partial t}
\left( \ME B_t B_s \right) = K(t-s),
\]
where $K(t)$ is an even function, $K \in L_1[-T, T]$.
\end{assumption}

\begin{lemma}\label{lem-covfun41}
Under Assumption~\ref{assump-B}, the integral
$\int_0^T f(t) \, dB_t$ exists as the mean square limit of the corresponding Riemann sums for any $f \in L_2[0,T]$.
Moreover,
\begin{equation}\label{eq-cov-int}
\ME \left[
\int_0^T f(t) \, dB_t \: \int_0^T g(s) \, dB_s \right] =
\int_0^T f(t) \int_0^T K(t-s) g(s) \, ds \, dt
\end{equation}
for any $f, g \in L_2[0,T]$.
\end{lemma}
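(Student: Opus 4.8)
The plan is to first establish existence of the stochastic integral $\int_0^T f(t)\,dB_t$ for step functions, then extend by an $L_2$-isometry to all of $L_2[0,T]$, and finally obtain the covariance formula \eqref{eq-cov-int} by continuity from its validity on step functions. For a step function $f = \sum_{j} c_j \indicator_{(t_{j-1},t_j]}$ the Riemann sums are literally finite linear combinations $\sum_j c_j (B_{t_j} - B_{t_{j-1}})$, so the integral exists trivially as such a combination. The key computation is that for two step functions $f,g$ adapted to a common partition $0 = t_0 < \dots < t_n = T$,
\[
\ME\left[\int_0^T f\,dB \int_0^T g\,dB\right]
= \sum_{j,k} c_j d_k \, \ME (B_{t_j} - B_{t_{j-1}})(B_{t_k} - B_{t_{k-1}}),
\]
and under Assumption~\ref{assump-B} each covariance equals
\[
\ME (B_{t_j} - B_{t_{j-1}})(B_{t_k} - B_{t_{k-1}})
= \int_{t_{j-1}}^{t_j}\!\int_{t_{k-1}}^{t_k} K(t-s)\,ds\,dt,
\]
which is just the statement that the mixed second difference of $\ME B_t B_s$ is the double integral of its mixed partial $K(t-s)$. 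Summing over $j,k$ gives exactly the right-hand side of \eqref{eq-cov-int} for step functions, and in particular, taking $g=f$, the second-moment identity $\ME\bigl(\int_0^T f\,dB\bigr)^2 = \int_0^T\!\int_0^T K(t-s) f(t) f(s)\,ds\,dt$.

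Next I would bound this quadratic form: since $K \in L_1[-T,T]$, Young's inequality (or a direct Fubini estimate) gives
\[
\left| \int_0^T\!\int_0^T K(t-s) f(t) f(s)\,ds\,dt \right|
\le \norm{K}_{L_1[-T,T]} \norm{f}_{L_2[0,T]}^2,
\]
so the map $f \mapsto \int_0^T f\,dB$ from step functions into $L_2(\Omega)$ is bounded, hence uniformly continuous. Since step functions are dense in $L_2[0,T]$, this map extends uniquely to a bounded linear map on all of $L_2[0,T]$; for $f \in L_2[0,T]$ one checks that the Riemann sums associated to any sequence of partitions with mesh tending to zero are Cauchy in $L_2(\Omega)$ (approximate $f$ by the step function constant on each subinterval and use the bound above on the difference), so the extension coincides with the mean-square limit of Riemann sums, as claimed. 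Finally, \eqref{eq-cov-int} for general $f,g \in L_2[0,T]$ follows by approximating both by step functions and passing to the limit: the left-hand side is continuous in $(f,g)$ by the Cauchy--Schwarz inequality in $L_2(\Omega)$ together with the second-moment bound, and the right-hand side is continuous in $(f,g)$ by the same Young-type estimate applied to the bilinear form.

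The main obstacle, such as it is, is the justification that $\ME (B_{t_j} - B_{t_{j-1}})(B_{t_k} - B_{t_{k-1}})$ equals the double integral of $K$: this requires that the mixed second difference of the covariance function genuinely recovers the integral of the mixed partial derivative, which is where Assumption~\ref{assump-B} (existence of the mixed derivative as an $L_1$ function $K(t-s)$, interpreted in the appropriate absolutely-continuous sense) does the real work. One should be slightly careful about whether "has a mixed derivative equal to $K(t-s)$" is meant in the classical pointwise sense or in the sense that $\ME B_t B_s = \int_0^t\!\int_0^s K(u-v)\,dv\,du$; the latter is what the proof uses, and under the stated hypotheses the two formulations agree up to the usual Lebesgue-point considerations. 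Everything else is routine density and continuity argument.
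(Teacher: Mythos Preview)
Your argument is correct. The paper's own proof is much terser: it invokes \cite{HuangCambanis} for the fact that $\int_0^T f\,dB$ exists as a mean-square limit if and only if the double Riemann integral $\int_0^T\!\int_0^T f(t)f(s)K(t-s)\,ds\,dt$ exists (and that the covariance identity \eqref{eq-cov-int} then holds automatically), and then verifies finiteness of this double integral via exactly the same Young-type convolution bound $\norm{K}_{L_1[-T,T]}\norm{f}_{L_2[0,T]}^2$ that you use. So the key analytic estimate is identical in both arguments; the difference is that you build the integral and the covariance formula from scratch via step functions and density, whereas the paper outsources that construction to the cited reference. Your version is more self-contained and makes explicit the role of Assumption~\ref{assump-B} in expressing increment covariances as double integrals of $K$, at the cost of some extra length; the caveat you raise at the end about the precise sense in which the mixed derivative is taken is a fair one, and the paper does not address it either.
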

\begin{proof}
According to \cite{HuangCambanis}, the integral
$\int_0^T f(t) \, dB_t$ exists if and only if the double Riemann integral
$\int_0^T\!\!\int_0^T f(t) f(s) K(t-s) \, ds \, dt$
exists. Moreover, if the both integrals $\int_0^T f(t) \, dB_t$ and $\int_0^T g(s) \, dB_s$ exist, then  the formula \eqref{eq-cov-int} holds.
However, using the properties of a convolution, one can prove that
\[
\int_0^T\!\!\int_0^T f(t) f(s) K(t-s) \, ds \, dt
\le \norm{K}_{L_1[-T,T]}\,\norm{f}_{L_2[0,T]}^2<\infty.
\qedhere
\]
\end{proof}

Define a linear operator
$\Gamma_T\colon L_2[0,T] \to L_2[0,T]$ by
\begin{equation}\label{defGammaK}
\Gamma_T f (t) = \int_0^T K(t-s) f(s) \,ds.
\end{equation}
It follows from~\eqref{defGammaK} that
\begin{equation}
\label{eq5a}
\ME \left[
\int_0^T f(t) \, dB_t \: \int_0^T g(s) \, dB_s \right] =
\int_0^T \Gamma_T f (t)  g(t) \, dt.
\end{equation}

The basic properties of the operator $\Gamma_T$ are collected in the following evident lemma.
\begin{lemma}\label{lem-operatorGamma}
Let Assumption~\ref{assump-B} hold. Then
\begin{enumerate}[(i)]
\item
The operator $\Gamma_T$ is bounded ($\|\Gamma_T\| \le \|K\|_{L_1[-T,T]}$)
and self-adjoint;
\item
The following relation between the operator $\Gamma_T$ and the covariance matrix $\Gamma^{(N)}$ from Proposition~\ref{pr:L^N} holds:
\[
M \Gamma_T M^*  = \Gamma^{(N)},
\]
(i.\,e.\ the matrix of the operator $M \Gamma_T M^*\colon \mathbb{R}^N \to \mathbb{R}^N$
equals $\Gamma^{(N)}$),
where $M\colon L_2[0,T] \to \mathbb{R}^N$ and $M^*\colon \mathbb{R}^N \to L_2[0,T]$
are mutually adjoint linear operators,
\begin{gather*}
M f = \left( \int_{t_{k-1}}^{t_k} f(s) \, ds \right) _{k=1}^N, \qquad
M^* x = \sum_{k=1}^n {x_k} \indicator_{[t_{k-1}, t_k]}.
\end{gather*}
\end{enumerate}
\end{lemma}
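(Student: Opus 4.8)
The plan is to prove the two items separately; each is a short consequence of Assumption~\ref{assump-B} and Lemma~\ref{lem-covfun41}, which is why the statement is labelled evident.

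For item~(i), I would first obtain the norm bound by viewing $\Gamma_T$ as a truncated convolution. Extending $K$ by zero outside $[-T,T]$ and $f$ by zero outside $[0,T]$, one has $\Gamma_T f(t) = (K * f)(t)$ for $t \in [0,T]$, because $t - s \in [-T,T]$ whenever $t,s \in [0,T]$. Young's convolution inequality then gives
\[
\norm{\Gamma_T f}_{L_2[0,T]} \le \norm{K * f}_{L_2(\mathbb{R})} \le \norm{K}_{L_1(\mathbb{R})}\norm{f}_{L_2(\mathbb{R})} = \norm{K}_{L_1[-T,T]}\norm{f}_{L_2[0,T]}.
\]
Self-adjointness follows by writing $\int_0^T \Gamma_T f(t)\, g(t)\,dt = \int_0^T\!\!\int_0^T K(t-s) f(s) g(t)\,ds\,dt$, swapping the order of integration (Fubini's theorem applies since the double integral is absolutely convergent, by the same integrability bound used in the proof of Lemma~\ref{lem-covfun41}), and replacing $K(t-s)$ by $K(s-t)$ using that $K$ is even; the result is $\int_0^T f(s)\, \Gamma_T g(s)\,ds$, i.e.\ $\langle \Gamma_T f, g\rangle = \langle f, \Gamma_T g\rangle$.

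For item~(ii), I would identify the matrix entries of $M\Gamma_T M^*$ by testing against the standard basis $e_1,\dots,e_N$ of $\mathbb{R}^N$. Since $M^* e_l = \indicator_{[t_{l-1},t_l]}$ and $M,M^*$ are mutually adjoint, $(M\Gamma_T M^*)_{k,l} = \langle \Gamma_T M^* e_l, M^* e_k\rangle = \int_0^T \Gamma_T \indicator_{[t_{l-1},t_l]}(t)\, \indicator_{[t_{k-1},t_k]}(t)\,dt$. Now I apply \eqref{eq5a} with $f = \indicator_{[t_{l-1},t_l]}$ and $g = \indicator_{[t_{k-1},t_k]}$, together with the elementary fact that $\int_0^T \indicator_{[t_{j-1},t_j]}(u)\,dB_u = B_{t_j} - B_{t_{j-1}}$ (the defining Riemann sums telescope once the partition refines to include $t_{j-1}$ and $t_j$; existence of the integral is guaranteed by Lemma~\ref{lem-covfun41}). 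This yields $(M\Gamma_T M^*)_{k,l} = \ME\left(B_{t_l} - B_{t_{l-1}}\right)\left(B_{t_k} - B_{t_{k-1}}\right) = \Gamma^{(N)}_{k,l}$, which is the claim.

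The computations are routine; the only place that needs a moment's care is the measure-theoretic bookkeeping in~(i) — the truncated-convolution identification and the Fubini swap under the sole hypotheses $K \in L_1[-T,T]$ and $f,g \in L_2[0,T]$ — but this rests on exactly the integrability estimate already established in the proof of Lemma~\ref{lem-covfun41}, so no new inequality is required, and I do not anticipate a genuine obstacle.
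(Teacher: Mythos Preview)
Your proposal is correct. The paper gives no proof at all---it simply labels the lemma ``evident'' and moves on---so there is nothing to compare beyond noting that your argument supplies the routine details the authors chose to omit. One minor remark: for part~(ii) you route the computation through \eqref{eq5a} and the stochastic integral, which is fine, but an even more direct verification is available by writing $(M\Gamma_T M^*)_{k,l}=\int_{t_{k-1}}^{t_k}\!\int_{t_{l-1}}^{t_l}K(t-s)\,ds\,dt$ and then integrating $K(t-s)=\tfrac{\partial^2}{\partial s\,\partial t}\ME B_t B_s$ back up to recover $\ME(B_{t_k}-B_{t_{k-1}})(B_{t_l}-B_{t_{l-1}})$; this avoids any appeal to the stochastic integral and is perhaps closer in spirit to what the authors had in mind as ``evident''.
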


Now we are ready to formulate our key assumption on the kernel $K$ (in terms of the operator~$\Gamma_T$).
\begin{assumption}\label{assump-hexists}
For all $T>0$, the constant function $\constone(t) = 1$, $t\in[0,T]$,
belongs to the range of the operator $\Gamma_T$, i.\,e.\ there exists a function $h_T \in L_2[0,T]$ such that
\[
\Gamma_T h_T = \constone .
\]
\end{assumption}


\begin{theorem}\label{th-L}
If all finite-dimensional distributions of the process $\{B_t,\allowbreak \; t\in(0,T]\}$,
are nonsingular and
Assumptions~\ref{assump-B} and \ref{assump-hexists}
hold, then
\begin{equation}\label{eq-L}
L(\theta) = \exp\left\{
\theta \int_0^T h_T(s) \, dB_s - \frac{\theta^2}{2}
\int_0^T h_T(s) \, ds\right\}
\end{equation}
is a likelihood function.
\end{theorem}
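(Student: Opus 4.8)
The plan is to pass to the limit in the discrete-time likelihoods of Lemma~\ref{pr:L^N} along a refining sequence of partitions. I would fix nested partitions $0=t_0^{(N)}<t_1^{(N)}<\dots<t_N^{(N)}=T$ of mesh tending to $0$ whose nodes are dense in $[0,T]$ (e.g.\ dyadic, $t_k^{(N)}=kT/2^N$). Since all finite-dimensional distributions of $B$ are nonsingular, Lemma~\ref{pr:L^N} gives, for the observations along the $N$-th partition, the likelihood
\[
L^{(N)}(\theta)=\exp\Bigl\{\theta\, z_N^\top\bigl(\Gamma^{(N)}\bigr)^{-1}\Delta X^{(N)}-\tfrac{\theta^2}{2}\, z_N^\top\bigl(\Gamma^{(N)}\bigr)^{-1}z_N\Bigr\},\qquad z_N=\bigl(t_k^{(N)}-t_{k-1}^{(N)}\bigr)_{k=1}^N .
\]
Let $M_N,M_N^*$ be the operators of Lemma~\ref{lem-operatorGamma}, so $\Gamma^{(N)}=M_N\Gamma_T M_N^*$ and $z_N=M_N\constone$, and set $\phi_N:=M_N^*\bigl(\Gamma^{(N)}\bigr)^{-1}M_N\constone\in L_2[0,T]$ --- a step function, constant on each subinterval. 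A one-line computation with $M_N^*x=\sum_k x_k\indicator_{[t_{k-1},t_k]}$ then gives $z_N^\top(\Gamma^{(N)})^{-1}\Delta X^{(N)}=\int_0^T\phi_N(s)\,dX_s$ (the integral exists by Lemma~\ref{lem-covfun41}) and $z_N^\top(\Gamma^{(N)})^{-1}z_N=\int_0^T\phi_N(s)\,ds$.

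The core of the argument is the claim $\phi_N\to h_T$ in the seminorm $\norm{f}_{\Gamma_T}^2:=\int_0^T\Gamma_T f(t)\,f(t)\,dt$, which is nonnegative by \eqref{eq5a} and obeys $\norm{f}_{\Gamma_T}\le\norm{K}_{L_1[-T,T]}^{1/2}\norm{f}_{L_2[0,T]}$ by Lemma~\ref{lem-operatorGamma}(i). From $M_N\Gamma_T\phi_N=\Gamma^{(N)}(\Gamma^{(N)})^{-1}M_N\constone=M_N\constone$ and $\Gamma_T h_T=\constone$ (Assumption~\ref{assump-hexists}) one checks, using the adjointness of $M_N$ and $M_N^*$, that $\int_0^T\Gamma_T(\phi_N-h_T)(t)\,g(t)\,dt=0$ for every step function $g$ adapted to the $N$-th partition. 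As $\phi_N$ is such a step function, it is the $\Gamma_T$-orthogonal projection of $h_T$ onto the corresponding finite-dimensional subspace $V_N$, so $\norm{\phi_N-h_T}_{\Gamma_T}=\min_{g\in V_N}\norm{g-h_T}_{\Gamma_T}$. Since $\bigcup_N V_N$ is dense in $L_2[0,T]$ and the $\Gamma_T$-seminorm is dominated by the $L_2$-norm, this minimum tends to $0$.

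Two consequences follow from $\norm{\phi_N-h_T}_{\Gamma_T}\to0$. First, $\int_0^T\phi_N(s)\,ds=\int_0^T\Gamma_T h_T(t)\,\phi_N(t)\,dt=\int_0^T\Gamma_T\phi_N(t)\,h_T(t)\,dt\to\norm{h_T}_{\Gamma_T}^2=\int_0^T h_T(s)\,ds$, by the Cauchy--Schwarz inequality for the $\Gamma_T$-seminorm. Second, by \eqref{eq5a},
\[
\ME\Bigl(\int_0^T\phi_N(s)\,dB_s-\int_0^T h_T(s)\,dB_s\Bigr)^2=\norm{\phi_N-h_T}_{\Gamma_T}^2\to0 ,
\]
so $\int_0^T\phi_N\,dB_s\to\int_0^T h_T\,dB_s$ in mean square. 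Since under $\theta=0$ the observed process $X$ coincides with $B$, these two facts yield $L^{(N)}(\theta)\to L(\theta)$ in $\Prob$-probability for every $\theta$, with $L(\theta)$ as in \eqref{eq-L}.

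Finally I would identify this limit with the density of the law of the observations under $\theta$ with respect to their law under $\theta=0$. Each $L^{(N)}(\theta)$ is exactly that density on $\mathcal{F}_N:=\sigma\bigl(X_{t_k^{(N)}}:k=1,\dots,N\bigr)$, hence a $\Prob$-martingale in $N$; moreover $\ME\bigl(L^{(N)}(\theta)\bigr)^2=\exp\bigl\{\theta^2 z_N^\top(\Gamma^{(N)})^{-1}z_N\bigr\}$ stays bounded because $z_N^\top(\Gamma^{(N)})^{-1}z_N$ converges, so the family $\{L^{(N)}(\theta)\}_N$ is uniformly integrable and converges in $L_1$; for $A$ in the algebra $\bigcup_N\mathcal{F}_N$ one then gets $\Prob_\theta(A)=\ME_0[\indicator_A\,L(\theta)]$, and this extends to $\mathcal{F}_\infty=\bigvee_N\mathcal{F}_N$, which coincides with the observation $\sigma$-algebra $\sigma(X_t:t\in[0,T])$ when the paths are regular enough (as in all the examples). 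Matching the two limits proves \eqref{eq-L}. The main obstacle is the middle step --- that $\phi_N$ is the $\Gamma_T$-projection of $h_T$ and that the projection error vanishes --- because one must argue with the possibly degenerate seminorm $\norm{\cdot}_{\Gamma_T}$, and it is precisely here that Assumption~\ref{assump-hexists} enters essentially.
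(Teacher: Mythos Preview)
Your approach is correct and genuinely different from the paper's. The paper works ``top--down'': it takes the candidate $L(\theta)$ in \eqref{eq-L} and verifies directly, via a conditional-expectation computation, that $\ME_{0}[L(\theta)\mid\mathcal{F}_N]=L^{(N)}(\theta)$ for an \emph{arbitrary} partition. Concretely, it computes the conditional law of $\int_0^T h_T\,dB$ given $\mathcal{F}_N$ (Gaussian with mean $z^\top(\Gamma^{(N)})^{-1}\Delta B^{(N)}$ and variance $\int_0^T h_T\,dt - z^\top(\Gamma^{(N)})^{-1}z$), and then the log-normal mean formula collapses the conditional expectation to $L^{(N)}(\theta)$. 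Your argument is ``bottom--up'': you write $L^{(N)}(\theta)$ in terms of the step function $\phi_N=M_N^*(\Gamma^{(N)})^{-1}M_N\constone$, recognise $\phi_N$ as the $\Gamma_T$-orthogonal projection of $h_T$ onto step functions, deduce $\norm{\phi_N-h_T}_{\Gamma_T}\to 0$, hence $L^{(N)}(\theta)\to L(\theta)$ in probability, and close with an $L^2$-bounded martingale/UI argument to pass the density identity to the limit.

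Both routes hinge on the same ingredients ($\Gamma_T h_T=\constone$, self-adjointness of $\Gamma_T$, and $M_N\Gamma_T M_N^*=\Gamma^{(N)}$). The paper's computation is shorter and handles a single partition in one stroke, yielding as a byproduct the variance identity \eqref{eq8} and the inequality \eqref{comp-Tinteger} used later. Your projection viewpoint is more structural: it explains \emph{why} the discrete MLEs converge (they are best $\Gamma_T$-approximants), gives equality $\int_0^T h_T\,dt=\lim z_N^\top(\Gamma^{(N)})^{-1}z_N$ rather than just an inequality, and avoids the explicit Gaussian-regression calculation. The one point to make precise is the closing identification of $\mathcal{F}_\infty=\bigvee_N\mathcal{F}_N$ with $\sigma(X_t:t\in[0,T])$; note, however, that the paper's proof has exactly the same implicit step, since it, too, verifies \eqref{eq-152-desired} only on cylinder sets.
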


\begin{proof}
Let us show that the function $L(\theta)$ defined in \eqref{eq-L}
is a density function of a distribution
of the process $X_{t}$ for given $\theta$ with respect to the density function of a distribution of the process $B_t$
(it coincides with $X_t$ when $\theta=0$).
In other words, we need to prove that
\[
d P_\theta = L(\theta) \, d P_0,
\]
where $P_\theta$ is the probability measure that corresponds to the value of the parameter $\theta$.
It suffices to show that for all partitions
$0 =t_0 < t_1 < \ldots < t_N \le T$
of the interval $[0, T]$ and for all cylinder sets $A \in \mathcal{F}_N$ the following equality holds:
\begin{equation}
\label{eq-152-desired}
\int_A d P_\theta =  \int_A L(\theta) \, d P_0 ,
\end{equation}
where $\mathcal{F}_N$ is the $\sigma$-algebra, generated by the values $B_{t_k}$ of the process $B_t$ at the points $t_k$, $k=1,\ldots,N$.
We have
\begin{gather*}
\int_A d P_\theta = \int_A L^{(N)}(\theta) \, P_0, \\
\int_A L(\theta) \, d P_0 = \int_A \ME\nolimits_{\theta=0}[ L(\theta) \mid \mathcal{F}_N] \, d P_0,
\end{gather*}
where $L^{(N)}$ is the likelihood function
\eqref{l-def-114} for the discrete-time model.
To prove \eqref{eq-152-desired},
it suffices to show that
\[
L^{(N)}(\theta) = \ME\nolimits_{\theta=0}[ L(\theta) \mid \mathcal{F}_N].
\]

If $\theta=0$, then $X_t = B_t$,
\[
\ME\nolimits_{\theta=0} [ L(\theta) \mid \mathcal{F}_N ] =
\ME \exp \left\{
\theta \int_0^T h_T(s) \, dB_s
- \frac{\theta^2}{2} \int_0^T h_T(s) ds \right\}.
\]

Due to joint normality of $\int_0^T h_T(s) \, dB_s$ and $\Delta B^{(N)}$,
the conditional distribution of $\int_0^T h_T(s) \, dB_s$
with respect to $\mathcal{F}_N$ is Gaussian
\citep[Theorem 2.5.1]{Anderson:2003};
its conditional variance is nonrandom.
Let us find its parameters.
By the least squares method,
\[
\ME [B_t \mid \mathcal{F}_N] =
\ME \left[B_t \mid \Delta B^{(N)} \right]
= \cov(B_t, \Delta B^{(N)})
\left(\cov(\Delta B^{(N)}, \Delta B^{(N)})\right)^{-1} \!\Delta B^{(N)}\!.
\]

We have $\cov\left(\Delta B^{(N)}, \Delta B^{(N)}\right) = \Gamma^{(N)}$.
Calculate $\cov\left(B_t, \Delta B^{(N)}\right)$:
\[
\cov\left(B_t, \: B_{t_k} - B_{t_{k-1}}\right) =
    \int_{s=0}^t \int_{u=t_{k-1}}^{t_k} K(s-u) \, du \, ds,
\]
and for any vector $x = (x_k)_{k=1}^N \in \mathbb{R}^N$
\begin{align*}
\cov\left(B_t, \Delta B^{(N)}\right) x &= \sum_{k=1}^N \cov\left(B_t, \:  B_{t_k} - B_{t_{k-1}}\right) x_k\\
&= \int_{s=0}^t \sum_{k=1}^N \int_{u=t_{k-1}}^{t_k} K(s-u) x_k \, du \, ds
= \\
& = \int_{0}^t \int_{0}^{T} K(s-u) M^*x(u) \, du \, ds
 = \int_{0}^T \stepzerotot(s) \, \Gamma_T M^* x(s) \, ds = \\
& = \int_{0}^T \Gamma_T \stepzerotot(s) \, M^* x(s) \, ds
 = \left(M \Gamma_T \stepzerotot\right)^\top  x,
\end{align*}
whence we get
\[
    \cov\left(B_t, \Delta B^{(N)}\right) = \left(M \Gamma_T \stepzerotot\right)^\top .
\]
Therefore
\[
\ME [B_t \mid \mathcal{F}_N]
= \left(M \Gamma_T \stepzerotot\right)^\top \bigl(\Gamma^{(N)}\bigr)^{-1} \Delta B^{(N)}
= \int_0^t
\left(\Gamma_T M^* \bigl(\Gamma^{(N)}\bigr)^{-1} \Delta B^{(N)}\right) (s) \, ds .
\]
Then
\begin{align*}
\ME \left[ \int_0^T h_T(t) \, dB_t \Big| \mathcal{F}_N \right]
&= \int_0^T h_T(s) \left(\Gamma_T M^* \bigl(\Gamma^{(N)}\bigr)^{-1} \Delta B^{(N)}\right) (s) \, ds
\\
&= \int_0^T (\Gamma_T h_T)(s)\left (M^* \bigl(\Gamma^{(N)}\bigr)^{-1} \Delta B^{(N)}\right) (s) \, ds
\\
&= (M \Gamma_T h_T)^\top  \bigl(\Gamma^{(N)}\bigr)^{-1} \Delta B^{(N)},
\end{align*}
where we have used that the operator $\Gamma_T$ is self-adjoint.

Further, $M \Gamma_T h_T = M \constone = z$, where the vector $z$ is defined after \eqref{l-def-114}. Hence,
\[
\ME \left[ \int_0^T h_T(t) \, dB_t \Big| \mathcal{F}_N \right]
= z^\top \bigl(\Gamma^{(N)}\bigr)^{-1} \Delta B^{(N)} .
\]

In order to calculate the variance we apply the partition-of-variance
equality
\[
\var \left[ \int_0^T h_T(t) \, dB_t \right] =
\var \left( \ME \left[ \int_0^T h_T(t) \, dB_t \Big| \mathcal{F}_N \right] \right)
+ \var \left[ \int_0^T h_T(t) \, dB_t \Big| \mathcal{F}_N \right].
\]
We have
\[
\var\!\left[ \int_0^T h_T(t) \, dB_t \right] =
\int_0^T (\Gamma_T h_T)(t) \, h_T(t) \, dt
= \int_0^T \constone(t)  h_T(t) \, dt
= \int_0^T h_T(t) \, dt,
\]
and
\[
\var \left( \ME \left[ \int_0^T h_T(t) \, dB_t \Big|
 \mathcal{F}_N \right] \right)
= \var \left( z^\top \bigl(\Gamma^{(N)}\bigr)^{-1} \Delta B^{(N)} \right)
= z^\top \bigl(\Gamma^{(N)}\bigr)^{-1} z .
\]

Hence,
\begin{equation}\label{eq8}
\var \left[ \int_0^T h_T(t) \, dB_t \Big| \mathcal{F}_N \right] =
\int_0^T h_T(t) \, dt - z^\top \bigl(\Gamma^{(N)}\bigr)^{-1} z.
\end{equation}

Applying the formula for the mean of  the log-normal distribution, we obtain
\begin{multline*}
\ME\nolimits_{\theta=0} [ L(\theta) \mid \mathcal{F}_N ]
= \ME \exp \biggl\{
 \theta z^\top \bigl(\Gamma^{(N)}\bigr)^{-1} \Delta B^{(N)}\\
+ \frac{\theta^2}{2} \biggl(
\int_0^T h_T(t) \, dt - z^\top \bigl(\Gamma^{(N)}\bigr)^{-1} z
\biggr)
- \frac{\theta^2}{2} \int_0^T h_T(s) ds \biggr\}
= L^{(N)}(\theta) .
\end{multline*}

Thus, \eqref{eq-152-desired} is proved.
\end{proof}

\begin{corollary}
The maximum likelihood estimator of $\theta$ by continuous observations is given by
\begin{equation}\label{equest}
\hat\theta_T = \frac
{\int_0^T h_T(t) \, dX_t}
{\int_0^T h_T(t) \, dt} .
\end{equation}
\end{corollary}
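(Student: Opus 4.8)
Under the assumptions of Theorem~\ref{th-L}, the estimator will be obtained, exactly as in the discrete case, by maximising the likelihood function over $\theta\in\mathbb{R}$. First I would record the likelihood as a functional of the \emph{observed} trajectory. Theorem~\ref{th-L} identifies $dP_\theta/dP_0$ with the functional \eqref{eq-L} of the canonical process, which under $P_0$ coincides with $B$; since the data are the path of $X$, evaluating this functional at the observation replaces $dB_s$ by $dX_s$, so
\[
L(\theta)=\exp\left\{\theta\int_0^T h_T(s)\,dX_s-\frac{\theta^2}{2}\int_0^T h_T(s)\,ds\right\},
\]
where, in view of $X_t=\theta t+B_t$, the integral $\int_0^T h_T(s)\,dX_s$ is understood as $\theta\int_0^T h_T(s)\,ds+\int_0^T h_T(s)\,dB_s$; the first term is a finite Lebesgue integral because $h_T\in L_2[0,T]\subset L_1[0,T]$, and the second exists by Lemma~\ref{lem-covfun41}.

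Next I would maximise $\ell(\theta):=\ln L(\theta)=\theta\int_0^T h_T(s)\,dX_s-\frac{\theta^2}{2}\int_0^T h_T(s)\,ds$, a quadratic polynomial in $\theta$. The stationarity condition $\ell'(\theta)=\int_0^T h_T(s)\,dX_s-\theta\int_0^T h_T(s)\,ds=0$ produces the candidate \eqref{equest}. To see that it is the unique global maximiser I would check that the coefficient of $\theta^2$ is negative, i.e.\ $\int_0^T h_T(s)\,ds>0$: by Assumption~\ref{assump-hexists}, self-adjointness of $\Gamma_T$, and \eqref{eq5a},
\[
\int_0^T h_T(s)\,ds=\int_0^T \constone(s)\,h_T(s)\,ds=\int_0^T (\Gamma_T h_T)(s)\,h_T(s)\,ds=\ME\left[\left(\int_0^T h_T(s)\,dB_s\right)^{2}\right]\ge 0,
\]
and this quantity cannot vanish, for otherwise $\int_0^T h_T(s)\,dB_s=0$ a.s., whence $\int_0^T g(s)\,ds=\ME\left[\int_0^T h_T(s)\,dB_s\int_0^T g(s)\,dB_s\right]=0$ for every $g\in L_2[0,T]$ by \eqref{eq5a} and $\Gamma_T h_T=\constone$, which is absurd (take $g=\constone$). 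Hence $\ell$ is strictly concave and \eqref{equest} is its maximiser.

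I do not expect a genuine obstacle here: the argument is the continuous-time counterpart of Lemma~\ref{pr:L^N}. The only points that need a little care are the first step --- justifying that the likelihood, as a function of the observed path of $X$, is \eqref{eq-L} with $dB$ replaced by $dX$, together with the meaning of $\int_0^T h_T\,dX$ --- and the strict positivity of $\int_0^T h_T(s)\,ds$, which is what guarantees that $\ell(\theta)$ attains a maximum (and not a minimum) and that the denominator in \eqref{equest} is nonzero.
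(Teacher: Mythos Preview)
Your proposal is correct and follows exactly the route the paper has in mind: the corollary is stated without proof, as an immediate consequence of Theorem~\ref{th-L} obtained by maximising the quadratic exponent of \eqref{eq-L} over $\theta$. Your explicit justification that $\int_0^T h_T(s)\,ds>0$ (via \eqref{eq5a} and $\Gamma_T h_T=\constone$) is a useful detail the paper leaves implicit; it reappears later only as the variance computation in Proposition~\ref{prop-contt}.
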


\subsection{Properties of the estimator}
It follows immediately from \eqref{equest} that the maximum likelihood estimator $\hat\theta_T$
is equal to
\begin{equation}
\label{MLEstcdthce}
\hat\theta_T = \theta + \frac{\int_0^T h_T(t) \, dB_t}
{\int_0^T h_T(t) \, dt} .
\end{equation}

\begin{prop}\label{prop-contt}
The estimator $\hat\theta_T$ is unbiased and
normally distributed.
Its variance is equal to
\begin{equation}
\var \hat\theta_T = \ME\left(\hat\theta_T - \theta\right)^2 =
\frac{1}{\int_0^T h_T(t) \, dt} .
\label{eq-prop-contt-var}
\end{equation}
\end{prop}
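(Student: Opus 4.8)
The plan is to read off everything from representation \eqref{MLEstcdthce}, in which $\hat\theta_T - \theta$ is the single stochastic integral $\int_0^T h_T(t)\,dB_t$ divided by the deterministic quantity $\int_0^T h_T(t)\,dt$. By Assumption~\ref{assump-hexists} we have $h_T \in L_2[0,T]$, so Lemma~\ref{lem-covfun41} guarantees that $\int_0^T h_T(t)\,dB_t$ exists as a mean-square limit of Riemann sums $\sum_j h_T(\xi_j)(B_{s_{j+1}} - B_{s_j})$. Each such sum is a centered Gaussian random variable (a linear combination of the jointly Gaussian increments of $B$, which are centered because $B$ is centered), and $L_2$-limits of centered Gaussian variables are centered Gaussian. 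Hence $\int_0^T h_T(t)\,dB_t$ is centered Gaussian, and therefore so is $\hat\theta_T - \theta$; this yields both the unbiasedness $\ME\hat\theta_T = \theta$ and the asserted normality.

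For the variance I would compute, using \eqref{MLEstcdthce},
\[
\var\hat\theta_T = \ME\left(\hat\theta_T - \theta\right)^2
= \frac{\var\!\left(\int_0^T h_T(t)\,dB_t\right)}{\left(\int_0^T h_T(t)\,dt\right)^2}.
\]
By \eqref{eq5a} applied with $f = g = h_T$, together with $\Gamma_T h_T = \constone$ from Assumption~\ref{assump-hexists},
\[
\var\!\left(\int_0^T h_T(t)\,dB_t\right) = \int_0^T \Gamma_T h_T(t)\, h_T(t)\,dt
= \int_0^T \constone(t)\, h_T(t)\,dt = \int_0^T h_T(t)\,dt,
\]
which is exactly the computation already carried out inside the proof of Theorem~\ref{th-L}. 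Substituting this into the previous display gives
\[
\var\hat\theta_T = \frac{\int_0^T h_T(t)\,dt}{\left(\int_0^T h_T(t)\,dt\right)^2} = \frac{1}{\int_0^T h_T(t)\,dt},
\]
which is \eqref{eq-prop-contt-var}.

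One small point worth addressing is that the denominator $\int_0^T h_T(t)\,dt$ must be strictly positive for $\hat\theta_T$ to be well defined and for the formula above to make sense. Since $\int_0^T h_T(t)\,dt = \var\!\left(\int_0^T h_T(t)\,dB_t\right) \ge 0$, it can only fail to be positive if $\int_0^T h_T(t)\,dB_t = 0$ almost surely; but then \eqref{eq5a} would force $\int_0^T \Gamma_T h_T(t)\, g(t)\,dt = 0$ for every $g \in L_2[0,T]$, hence $\Gamma_T h_T = 0$ in $L_2[0,T]$, contradicting $\Gamma_T h_T = \constone$. Thus $\int_0^T h_T(t)\,dt > 0$. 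I do not anticipate any real obstacle here: the proposition is essentially a corollary of the machinery set up for Theorem~\ref{th-L}, and the only mildly delicate steps are the passage of Gaussianity to the mean-square limit and this positivity remark.
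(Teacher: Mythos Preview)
Your proposal is correct and follows essentially the same route as the paper's own proof: read off unbiasedness and normality from the representation \eqref{MLEstcdthce} as a linear functional of the centered Gaussian process $B$, then compute the variance via \eqref{eq5a} together with $\Gamma_T h_T = \constone$. You have simply supplied more detail than the paper does (the passage of Gaussianity through the $L_2$-limit and the positivity of $\int_0^T h_T(t)\,dt$), but the argument is the same.
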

\begin{proof}
Unbiasedness and normality follows from the fact that
$\hat\theta - \theta$ is a linear functional of
centered Gaussian process $B$.
By \eqref{eq5a},
\[
\var \biggl( \int_0^T  h_T(t) \, dB_t \biggr) =
\int_0^T \Gamma_T h(t) \, h_T(t) \, dt =
\int_0^T \constone(t) \, h_T(t) \, dt = \int_0^T h_T(t) \, dt.
\]
Thus, equation \eqref{eq-prop-contt-var} immediately follows from
\eqref{MLEstcdthce}.
\end{proof}

\begin{corollary}
Let  the process $B=\{B_t,\; t\ge 0\}$ satisfy
Assumptions 1 and 2.
If
\begin{equation}\label{eq:denom}
  \int_0^T h_T(t) \, dt \to \infty,
  \quad \textrm{as} \quad T\to +\infty,
\end{equation}
then the maximum likelihood estimator $\hat\theta_T$
is mean-square consistent, i.e.,
$  \ME( \hat\theta_T - \theta)^2 \to 0$,
as $T\to +\infty$.
\end{corollary}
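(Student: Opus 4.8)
The statement is an immediate consequence of the variance formula established in Proposition~\ref{prop-contt}. The plan is to invoke that proposition directly: since $\hat\theta_T$ is unbiased, the mean-square error coincides with the variance, namely
\[
\ME\bigl(\hat\theta_T - \theta\bigr)^2 = \var\hat\theta_T = \frac{1}{\int_0^T h_T(t)\,dt},
\]
and then let $T \to +\infty$, using the hypothesis \eqref{eq:denom} to conclude that the right-hand side tends to $0$.

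The only point that needs a brief remark is that the expression $1/\int_0^T h_T(t)\,dt$ is well defined for every $T>0$, i.e.\ that the denominator is strictly positive. This follows from the computation already carried out in the proof of Proposition~\ref{prop-contt}: by \eqref{eq5a} with $f=g=h_T$ and Assumption~\ref{assump-hexists},
\[
\int_0^T h_T(t)\,dt = \int_0^T \Gamma_T h_T(t)\, h_T(t)\,dt = \var\!\left(\int_0^T h_T(s)\,dB_s\right) \ge 0,
\]
and it cannot vanish, since $\int_0^T h_T(t)\,dt = 0$ would force $h_T = 0$ in $L_2[0,T]$ and hence $\Gamma_T h_T = 0 \neq \constone$, contradicting Assumption~\ref{assump-hexists}. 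Thus the quantity $\int_0^T h_T(t)\,dt$ is a well-defined positive number for each $T$, and the hypothesis that it diverges to $+\infty$ makes the conclusion meaningful.

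I do not anticipate any real obstacle here: the corollary is a one-line consequence of the exact variance formula, the preceding Proposition~\ref{prop-contt} having already done all the work of identifying $\var\hat\theta_T$ via the self-adjointness of $\Gamma_T$ and the defining relation $\Gamma_T h_T = \constone$. The write-up will therefore consist of recalling \eqref{eq-prop-contt-var}, noting the positivity of the denominator as above, and passing to the limit.
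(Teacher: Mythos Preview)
Your proposal is correct and matches the paper's treatment: the corollary is stated without proof there, as it is indeed an immediate consequence of the variance formula \eqref{eq-prop-contt-var} in Proposition~\ref{prop-contt}. One small slip in your positivity remark: from $\langle \Gamma_T h_T, h_T\rangle = 0$ and the positive semidefiniteness of $\Gamma_T$ you get $\Gamma_T h_T = 0$ directly (via the square root), which already contradicts $\Gamma_T h_T = \constone$; the intermediate claim ``$h_T = 0$'' is not needed and not justified as written.
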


It can be hard to verify the condition \eqref{eq:denom}.
The following result gives sufficient conditions for the consistency in terms of the autocovariance function of $B$.
\begin{theorem}\label{thm-conscontt}
Let  the process $B=\{B_t,\; t\ge 0\}$ satisfy
Assumptions 1 and 2.
If the covariance function of increment process $B_N - B_{N-1}$
tends to 0:
\[
 \ME \left(B_{ N+1 } - B_{N}\right) B_1 \to 0 \quad
 \textrm{as} \quad
 N\to\infty,
\]
then the maximum likelihood estimator $\hat\theta_T$
is mean-square consistent.
\end{theorem}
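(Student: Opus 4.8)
The plan is to reduce the assertion to the Corollary immediately preceding it: once we know that $\int_0^T h_T(t)\,dt\to+\infty$ as $T\to+\infty$, mean-square consistency follows. Since $\int_0^T h_T(t)\,dt=1/\var\hat\theta_T$ by Proposition~\ref{prop-contt}, the task is to bound $\var\hat\theta_T$ from above, and the natural way is to compare the continuous-time information $\int_0^T h_T(t)\,dt$ with the discrete-time information already analysed in Section~\ref{sec:discr}.

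Concretely, I would fix $T\ge1$, put $N=\lfloor T\rfloor$, and take the partition of $[0,T]$ by the integer nodes $t_k=k$, $k=0,1,\dots,N$ (note $t_N=N\le T$). For this partition $z=(1,\dots,1)^\top\in\mathbb{R}^N$, so $\norm{z}^2=N$, and by Remark~\ref{rem-Toeplitz} the matrix $\Gamma^{(N)}$ is Toeplitz with $\Gamma^{(N)}_{l,m}=\ME\bigl(B_{|l-m|+1}-B_{|l-m|}\bigr)B_1$, hence $z^\top\Gamma^{(N)}z=\sum_{l,m=1}^{N}\Gamma^{(N)}_{l,m}$. The partition-of-variance identity \eqref{eq8}, together with the nonnegativity of a conditional variance, gives
\[
\int_0^T h_T(t)\,dt \;\ge\; z^\top\bigl(\Gamma^{(N)}\bigr)^{-1}z,
\]
and the matrix inequality $x^\top A^{-1}x\ge\norm{x}^4/(x^\top A x)$ established above then yields
\[
\int_0^T h_T(t)\,dt \;\ge\; \frac{N^2}{\sum_{l,m=1}^{N}\Gamma^{(N)}_{l,m}}\,.
\]

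Finally I would show $\frac1{N^2}\sum_{l,m=1}^{N}\Gamma^{(N)}_{l,m}\to0$ as $N\to\infty$; this is exactly the Toeplitz (Ces\`aro) computation carried out in the proof of Theorem~\ref{thm-consdiskr} with step $h=1$, and it is driven precisely by the hypothesis $\ME(B_{N+1}-B_N)B_1\to0$. Since $N=\lfloor T\rfloor\to\infty$ as $T\to+\infty$, the two displays force $\int_0^T h_T(t)\,dt\to+\infty$, and the preceding Corollary completes the proof. The only point that needs a little care is the bridge between the continuous and discrete pictures: the appearance of $(\Gamma^{(N)})^{-1}$ presupposes nonsingularity of the Gaussian vector $(B_k)_{k=1}^N$, which is part of the standing hypotheses behind \eqref{eq8}. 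If one prefers to sidestep this, one can instead use the variational bound $\int_0^T h_T(t)\,dt\ge (z^\top x)^2/(x^\top\Gamma^{(N)}x)$ — valid for every $x\in\mathbb{R}^N$ with $x^\top\Gamma^{(N)}x>0$, and obtained from the Cauchy--Schwarz inequality for the positive semidefinite form $(f,g)\mapsto\ME[\int_0^T f\,dB_t\int_0^T g\,dB_t]$ applied to $f=h_T$ and $g=M^*x$ — and evaluate it at $x=z$, which gives the same estimate $N^2/\sum_{l,m}\Gamma^{(N)}_{l,m}$ with no invertibility required. Everything else is a direct reuse of the discrete-time lemmas.
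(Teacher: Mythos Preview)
Your proof is correct and follows essentially the same route as the paper's: both derive the key comparison $\int_0^T h_T(t)\,dt \ge z^\top(\Gamma^{(N)})^{-1}z$ from the partition-of-variance identity \eqref{eq8} with the integer grid $t_k=k$, $N=\lfloor T\rfloor$, and then feed in the discrete-time bound. The paper simply cites Theorem~\ref{thm-consdiskr} at that point (so that $\var\hat\theta_T\le\var\hat\theta^{(N)}\to0$), whereas you unpack its proof --- the matrix inequality and the Ces\`aro computation --- explicitly; your added remark on bypassing the invertibility of $\Gamma^{(N)}$ via Cauchy--Schwarz is a genuine, if minor, extra.
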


\begin{proof}
The estimator $\hat\theta^{(N)}$ from the discrete sample $\{X_1,\ldots,X_N\}$
is mean-square consistent by Theorem~\ref{thm-consdiskr}.
The estimator from the continuous-time sample\linebreak $\{X_t,\; t\in[0,T]\}$
is unbiased. Now compare the variances of the discrete and continuous-time
estimators.

The desired inequalities are got from the proof of
Theorem~\ref{thm-consdiskr}.
Suppose that $T\ge 1$, $N$ is an integer such that $N \le T < N+1$.
By equation \eqref{eq8}
we have
\begin{equation}
\label{comp-Tinteger}
\int_0^T h_T(t) \, dt \ge z^\top \bigl(\Gamma^{(N)}\bigr)^{-1} z.
\end{equation}
As $\var \hat\theta_T = \frac{1}{\int_0^T h_T(t) \, dt}$,
$\var \hat\theta^{(N)} = \left(z^\top \bigl(\Gamma^{(N)}\bigr)^{-1} z\right)^{-1}$,
we have
$\var\hat\theta_T \le \var \hat\theta^{(N)}$, and
\[
\lim_{T\to+\infty} \ME\left(\hat\theta_T - \theta\right)^2
=\lim_{N\to\infty} \ME\left(\hat\theta^{(N)} - \theta\right)^2 = 0.
\qedhere
\]
\end{proof}

To prove the strong consistency of $\hat\theta_T$, we need the following auxiliary result.
\begin{lemma}
Let the process $B$ satisfy the conditions of Theorem~\ref{th-L}. Then
the estimator process   $\hat\theta=\{\hat\theta_T, T\geq 0\}$ has independent increments.
\end{lemma}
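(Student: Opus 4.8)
The plan is to mirror the argument of Lemma~\ref{lem-indincrdiscr}. For every $T>0$ the variable $\hat\theta_T-\theta=\frac{1}{d_T}\int_0^T h_T(t)\,dB_t$ is a linear functional of the centered Gaussian process $B$, where I write $\xi_T=\int_0^T h_T(t)\,dB_t$ and $d_T=\int_0^T h_T(t)\,dt$, so that $\var\xi_T=d_T$ and $\var\hat\theta_T=1/d_T$ by Proposition~\ref{prop-contt}. Consequently the process $\{\hat\theta_T-\theta,\ T>0\}$ is jointly Gaussian, and it suffices to check that its increments are uncorrelated, i.e.\ that $\ME\bigl(\hat\theta_{T_4}-\hat\theta_{T_3}\bigr)\bigl(\hat\theta_{T_2}-\hat\theta_{T_1}\bigr)=0$ whenever $T_1\le T_2\le T_3\le T_4$; then independence of increments of $\{\hat\theta_T\}$ follows, exactly as in the discrete case.

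The key step will be the continuous-time counterpart of the identity $\ME\hat\theta^{(N_2)}\hat\theta^{(N_3)}=\bigl(z_{N_3}^\top(\Gamma^{(N_3)})^{-1}z_{N_3}\bigr)^{-1}$ used in Lemma~\ref{lem-indincrdiscr}, namely
\[
\ME\,\xi_{T_2}\xi_{T_3}=d_{T_2}\qquad\text{for }0<T_2\le T_3 .
\]
To get this I would extend $h_{T_2}$ by zero to a function $\tilde h_{T_2}\in L_2[0,T_3]$, note that $\xi_{T_2}=\int_0^{T_3}\tilde h_{T_2}(t)\,dB_t$, and then apply formula \eqref{eq5a} on $[0,T_3]$ together with the self-adjointness of $\Gamma_{T_3}$ (Lemma~\ref{lem-operatorGamma}) and the defining relation $\Gamma_{T_3}h_{T_3}=\indicator_{[0,T_3]}$ (Assumption~\ref{assump-hexists}), moving the operator from $\tilde h_{T_2}$ onto $h_{T_3}$:
\[
\ME\,\xi_{T_2}\xi_{T_3}
=\int_0^{T_3}(\Gamma_{T_3}\tilde h_{T_2})(t)\,h_{T_3}(t)\,dt
=\int_0^{T_3}\tilde h_{T_2}(t)\,(\Gamma_{T_3}h_{T_3})(t)\,dt
=\int_0^{T_3}\tilde h_{T_2}(t)\,dt
=\int_0^{T_2}h_{T_2}(t)\,dt=d_{T_2}.
\]
This immediately gives, for $T_2\le T_3$, $\ME\bigl(\hat\theta_{T_2}-\theta\bigr)\bigl(\hat\theta_{T_3}-\theta\bigr)=d_{T_2}/(d_{T_2}d_{T_3})=1/d_{T_3}=\var\hat\theta_{T_3}$, which does not depend on $T_2$.

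The rest is the same telescoping as in Lemma~\ref{lem-indincrdiscr}: for $T_1\le T_2\le T_3$ the last relation yields $\ME\bigl(\hat\theta_{T_3}-\theta\bigr)\bigl(\hat\theta_{T_2}-\hat\theta_{T_1}\bigr)=\var\hat\theta_{T_3}-\var\hat\theta_{T_3}=0$, and subtracting this for $T_3$ and for $T_4$ gives $\ME\bigl(\hat\theta_{T_4}-\hat\theta_{T_3}\bigr)\bigl(\hat\theta_{T_2}-\hat\theta_{T_1}\bigr)=0$ for all $T_1\le T_2\le T_3\le T_4$; hence the Gaussian process $\{\hat\theta_T-\theta\}$, and therefore $\{\hat\theta_T\}$, has independent increments. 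The only point that needs a careful (but routine) justification is the identification $\xi_{T_2}=\int_0^{T_3}\tilde h_{T_2}(t)\,dB_t$ — that extension of an $L_2$-integrand by zero does not change the value of $\int_0^{\cdot}f\,dB$ — which follows by passing to the mean-square limit of Riemann sums over partitions of $[0,T_3]$ containing the point $T_2$, the limits existing by Lemma~\ref{lem-covfun41}. I expect this handling of the operators $\Gamma_{T_2}$ and $\Gamma_{T_3}$ on the two intervals $[0,T_2]$ and $[0,T_3]$ to be the main (though minor) obstacle.
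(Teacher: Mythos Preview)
Your proposal is correct and follows essentially the same route as the paper: compute $\ME\,\xi_{T_2}\xi_{T_3}=d_{T_2}$ via \eqref{eq5a} and $\Gamma_{T_3}h_{T_3}=\indicator_{[0,T_3]}$, then telescope to show the Gaussian process $\{\hat\theta_T\}$ has uncorrelated, hence independent, increments. The only cosmetic difference is that the paper applies $\Gamma_{T_3}$ directly to $h_{T_3}$ in \eqref{eq5a} rather than invoking self-adjointness, and it leaves the extension-by-zero of $h_{T_2}$ implicit where you spell it out.
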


\begin{proof}
Let  $T_2 \le T_3$. Then
\begin{align*}
\ME \left[\int_0^{T_3} h_{T_3}(t) \, dB_t
          \int_0^{T_2} h_{T_2}(s) \, dB_s \right]
& = \int_0^{T_2} \Gamma_{T_3} h_{T_3}(t) \, h_{T_2}(t) \, dt
\\ &= \int_0^{T_2} \indicator\nolimits_{[0,T_3]}(t) \, h_{T_2}(t) \, dt
    = \int_0^{T_2} \, h_{T_2}(t) \, dt .
\end{align*}
Thus, if $0 < T_1 \le T_2 \le T_3 \le T_4$, then
\begin{multline*}
\ME (\hat\theta_{T_4} - \hat\theta_{T_3})
    (\hat\theta_{T_2} - \hat\theta_{T_1}) \\
\begin{aligned}
&=
\newcommand{\hatthetaTk}[1]{
\frac{\int_0^{T_#1} h_{T_#1}(t) \, dB_t}
     {\int_0^{T_#1} h_{T_#1}(t) \, dt}}
\ME \left( \hatthetaTk{4} - \hatthetaTk{3} \right)
    \left( \hatthetaTk{2} - \hatthetaTk{1} \right)
= \\ & =
\newcommand{\covhatthetaTkTk}[2]{
\frac{\ME \left[\int_0^{T_#1} h_{T_#1}(t) \, dB_t
                \int_0^{T_#2} h_{T_#2}(t) \, dB_t\right]}
{\int_0^{T_#1} h_{T_#1}(t) \, dt
 \int_0^{T_#2} h_{T_#2}(t) \, dt}}
\covhatthetaTkTk{4}{2} - \covhatthetaTkTk{3}{2}
- \\ &\quad -
\newcommand{\covhatthetaTkTk}[2]{
\frac{\ME \left[\int_0^{T_#1} h_{T_#1}(t) \, dB_t
                \int_0^{T_#2} h_{T_#2}(t) \, dB_t\right]}
{\int_0^{T_#1} h_{T_#1}(t) \, dt
 \int_0^{T_#2} h_{T_#2}(t) \, dt}}
\covhatthetaTkTk{4}{1} + \covhatthetaTkTk{3}{1}
= \\ & =
\newcommand{\fracdenom}[1]{
\frac{1}{\int_0^{T_#1} h_{T_#1}(t) \, dt}}
\fracdenom{4} - \fracdenom{3} - \fracdenom{4} + \fracdenom{3} = 0 .
\end{aligned}
\end{multline*}
Similarly to the proof of Lemma \ref{lem-indincrdiscr}, the random process $\hat\theta_T$ is Gaussian and  its increments
are proved to be uncorrelated so they  are independent.
\end{proof}

\begin{theorem}
Under conditions of Theorem~\ref{thm-conscontt}
the estimator $\hat\theta_T$ is strongly consistent.
\end{theorem}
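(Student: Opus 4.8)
The plan is to repeat, almost verbatim, the argument used in the discrete-time case, with the index $N$ replaced by the continuous parameter $T$. Three facts are available: the estimator $\hat\theta_T$ is unbiased and Gaussian (Proposition~\ref{prop-contt}); it is mean-square consistent, so that $\var\hat\theta_T=\bigl(\int_0^T h_T(t)\,dt\bigr)^{-1}\to 0$ as $T\to\infty$ (Theorem~\ref{thm-conscontt}); and the Gaussian process $\{\hat\theta_T,\;T\ge 0\}$ has independent increments (the preceding lemma).

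First I would fix $T_0>0$ and show that $\var\bigl(\hat\theta_T-\hat\theta_{T_0}\bigr)\to\var\hat\theta_{T_0}$ as $T\to\infty$. Indeed, exactly as in the discrete proof,
\[
\var\bigl(\hat\theta_T-\hat\theta_{T_0}\bigr)
=\var\hat\theta_T+\var\hat\theta_{T_0}
-2\sqrt{\var\hat\theta_T\,\var\hat\theta_{T_0}}\,\corr\bigl(\hat\theta_T,\hat\theta_{T_0}\bigr),
\]
where the last term is bounded in absolute value by $2\sqrt{\var\hat\theta_T\,\var\hat\theta_{T_0}}\to0$ while $\var\hat\theta_T\to0$; alternatively, the covariance computation in the independent-increments lemma yields directly the identity $\var\bigl(\hat\theta_T-\hat\theta_{T_0}\bigr)=\var\hat\theta_{T_0}-\var\hat\theta_T$ for $T\ge T_0$.

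Next, since the increments of $\{\hat\theta_T\}$ are independent, I would invoke Kolmogorov's maximal inequality: for every $\epsilon>0$,
\[
\Prob\Bigl(\sup_{T\ge T_0}\abs{\hat\theta_T-\hat\theta_{T_0}}>\frac{\epsilon}{2}\Bigr)
\le\frac{4}{\epsilon^2}\,\lim_{T\to\infty}\var\bigl(\hat\theta_T-\hat\theta_{T_0}\bigr)
=\frac{4}{\epsilon^2}\,\var\hat\theta_{T_0},
\]
the supremum being first taken over a countable dense set of indices and then extended to all $T\ge T_0$ by passing to the limit (the process with independent increments admits a separable, in fact right-continuous, modification). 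Combining this with Chebyshev's inequality and the unbiasedness of $\hat\theta_{T_0}$,
\[
\Prob\Bigl(\sup_{T\ge T_0}\abs{\hat\theta_T-\theta}\ge\epsilon\Bigr)
\le\Prob\Bigl(\abs{\hat\theta_{T_0}-\theta}\ge\frac{\epsilon}{2}\Bigr)
+\Prob\Bigl(\sup_{T\ge T_0}\abs{\hat\theta_T-\hat\theta_{T_0}}\ge\frac{\epsilon}{2}\Bigr)
\le\frac{8}{\epsilon^2}\,\var\hat\theta_{T_0},
\]
which tends to $0$ as $T_0\to\infty$ by mean-square consistency; hence $\hat\theta_T\to\theta$ almost surely as $T\to\infty$.

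The only point requiring care beyond the discrete argument is the passage from the discrete-index maximal inequality to its continuous-parameter version, i.e.\ the separability/measurability of the Gaussian process $\{\hat\theta_T-\theta\}$ with independent increments. This can be dispatched either by the standard existence of a right-continuous modification, or, even more directly, by observing that such a process is a time-changed Brownian motion $W_{v(T)}$ with $v(T)=\var\hat\theta_T$ non-increasing and $v(T)\to0$, so that $\hat\theta_T-\theta=W_{v(T)}\to0$ a.s.\ by continuity of Brownian motion at the origin. Everything else is a routine transcription of the proof of the discrete-time strong consistency theorem.
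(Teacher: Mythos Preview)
Your argument is correct and follows the paper's own proof essentially line for line: split $\{\sup_{T\ge T_0}|\hat\theta_T-\theta|\ge\epsilon\}$ into the $T_0$-term handled by Chebyshev and the increment term handled by Kolmogorov's maximal inequality for the independent-increments process, arriving at the same bound $\tfrac{8}{\epsilon^2}\var\hat\theta_{T_0}\to 0$. The only difference is that you are more careful than the paper about justifying the continuous-parameter supremum (separability / right-continuous modification), and your alternative observation that $\hat\theta_T-\theta\stackrel{d}{=}W_{v(T)}$ with $v(T)=\var\hat\theta_T\downarrow 0$ (since $\cov(\hat\theta_s,\hat\theta_t)=v(\max(s,t))$) gives an even cleaner route to almost sure convergence via continuity of Brownian motion at $0$---a point the paper does not raise.
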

\begin{proof}
By Kolmogorov's inequality, for any $\epsilon>0$ and $t_0 > 0$
\begin{align*}
\Prob\left( \sup_{T>t_0} |\hat\theta_T - \theta| > \epsilon \right)
&\le
\Prob\left( |\hat\theta_{t_0} - \theta| > \frac{\epsilon}{2} \right) +
\Prob\left( \sup_{T>t_0} |\hat\theta_T - \theta_{t_0}| > \frac{\epsilon}{2}
\right)
\le \\ & \le
\frac{4}{\epsilon^2} \var \hat\theta_{t_0} +
\frac{4}{\epsilon^2} \lim_{T\to+\infty}
\var\left(\hat\theta_T - \hat\theta_{t_0}\right) =
\frac{8}{\epsilon^2} \var \hat\theta_{t_0} .
\end{align*}
By Theorem~\ref{thm-conscontt},
\[
\lim_{t_0 \to +\infty} \Prob\left( \sup_{T>t_0} |\hat\theta_T - \theta| > \epsilon \right) = 0
\quad \mbox{for all} \quad \epsilon>0,
\]
whence the strong consistency follows.
\end{proof}

\begin{remark}
The Brownian motion does not satisfy Assumption~\ref{assump-B}
(as for covariance function $\max(s,t)$ of Wiener process,
$\frac{\partial \max(s,t)}{\partial t}$ is not continuous in $s$).
So we extend our model such that it can handle Wiener process.
Let the process $B$ be a sum of two independent random processes,
\begin{equation}\label{eq-Bcomb}
B_t = B_t^{\rm C} + W_t,
\end{equation}
where $B^{\rm C}$ satisfy Assumption~\ref{assump-B}, and
$W$ is a standard Wiener process.
Let us look at the changes of the statements if the process $B$ admits
representation \eqref{eq-Bcomb}
(Assumption~\ref{assump-B} for $B$ is dropped).
Lemma~\ref{lem-covfun41} changes as follows:
\[
\ME \left[ \int_0^T f(t) \, dB_t \int_0^T g(s) \, dB_s\right]
= \int_0^T f(t) \int_0^T K(t-s) g(s)\, ds \, dt
+ \int_0^T f(t) g(t) \, dt .
\]
Equation \eqref{eq5a} will stand true, if we set
\[
\Gamma_T f(t) = f(t) + \Gamma_T^{\rm C} f(t) =
f(t) + \int_0^T K(t-s) f(s) \, ds .
\]
Lemma~\ref{lem-operatorGamma} stands true
(with $\|\Gamma_T\| \le \|K\|_{L_1[-T,T]} + 1$).
Theorem~\ref{th-L} holds true; minor changes in the proof are required.
\end{remark}

\subsection{Examples}
\begin{example}\label{ex-Bh}
Let $B$ be a fractional Brownian motion with the Hurst index $H\in(1/2, 1)$.
Then $K(t) = \frac{H (2H-1)}{|t|^{2-2H}}$. We denote by $\Gamma^H_T$ the corresponding operator $\Gamma_T$.
Then for the function
\[
h_T(s) = C_H
s^{1/2-H} (T-s)^{1/2-H},
\]
$C_H=\left(H (2H-1) \mathrm{B} \left(H-\frac12, \frac32-H\right)\right)^{-1}$,
we have that
$\Gamma^H_T h_T = \constone$,
see~\cite{Norros1999}.
The maximum likelihood estimator is given by
\[
\hat\theta_T = \frac{T^{2H - 2}}{\Betafunction(3/2-H,\: 3/2-H)}
\int_0^T s^{1/2-H} (T-s)^{1/2-H} \, dX_s.
\]
\end{example}

\begin{example}\label{ex:B+W}
Consider the following model:
\begin{equation}\label{eq:mixed}
X_t = \theta t + W_t + B_t^{H},
\end{equation}
where $W$ is a standard Wiener process,
$B^{H}$ is a fractional Brownian motion
with Hurst index $H$,
and random processes $W_t$ and $B_t^{H}$ are independent.
The process $W_t + B_t^{H}$ admits representation
\eqref{eq-Bcomb} with $B^{\rm C}=B^H$.
Corresponding operator $\Gamma_T$ is
$\Gamma_T = I + \Gamma^H_T$
(see Example~\ref{ex-Bh} for the definition of $\Gamma^H_T$).
The operator $\Gamma^H_T$ is self-adjoint and positive semi-definite.
Hence, the operator $\Gamma_T$ is invertible.  Thus
Assumption~\ref{assump-hexists} holds true.

The function $h_T = \Gamma_T^{-1} \constone$ can be evaluated iteratively
\begin{equation}\label{eq:approx}
h_T = \sum_{k=0}^\infty
\frac{\left( \frac12 \, \norm{\Gamma^H_T} \, I - \Gamma^H_T\right)^k \constone}
     {\left( 1 + \frac12 \, \norm{\Gamma^H_T}\right)^{k+1}} \,.
\end{equation}
\end{example}

\subsection{Simulations}
We illustrate the behavior of the maximum likelihood estimator for Example~\ref{ex:B+W} with the help of simulation experiments.
For $T=1$ and $T=10$ and various values of $H$ we find $h_T$ iteratively by \eqref{eq:approx}.
Then for $\theta=2$ we simulate 1000 realizations of the process~\eqref{eq:mixed} for each $H$ and
compute the estimates by \eqref{equest}.
The means and variances of these estimates are reported in Table~\ref{tab:1}.
The theoretical variances calculated by \eqref{eq-prop-contt-var} are also presented.
We see that these simulation studies confirm the theoretical properties of $\widehat\theta_T$, especially unbiasedness and consistency.

\begin{table}
\caption{The means and variances of $\hat\theta_T$}
\centering
\footnotesize
\begin{tabular}{llccc}\toprule
\multirow{2}*{$H$} & \multirow{2}*{$T$} & Sample & Sample & Theoretical\\
& & Mean & Variance & Variance\\
\otoprule
\multirow{2}*{$0.6$}
& $1$   & $2.0344$ & $1.9829$ & $1.8292$\\
& $10$ & $2.0047$ & $0.2584$ & $0.2356$\\
\addlinespace
\multirow{2}*{$0.7$}
& $1$   & $1.9995$ & $1.9956$ & $1.9692$\\
& $10$ & $2.0296$ & $0.3484$ & $0.3270$\\
\addlinespace
\multirow{2}*{$0.8$}
& $1$   & $2.0165$ & $2.0435$ & $1.9930$\\
& $10$ & $2.0071$ & $0.5155$ & $0.4392$\\
\addlinespace
\multirow{2}*{$0.9$}
& $1$   & $2.0117$ & $2.0376$ & $1.9984$\\
& $10$ & $2.0099$ & $0.7710$ & $0.5867$\\
\bottomrule
\end{tabular}
\label{tab:1}
\end{table}

\def\ocirc#1{\ifmmode\setbox0=\hbox{$#1$}\dimen0=\ht0 \advance\dimen0
  by1pt\rlap{\hbox to\wd0{\hss\raise\dimen0
  \hbox{\hskip.2em$\scriptscriptstyle\circ$}\hss}}#1\else {\accent"17 #1}\fi}

\end{document}